\theoremstyle{plain}
\newcounter{theorem}[section]
\numberwithin{theorem}{section}
\numberwithin{equation}{subsection}
\newtheorem{Thm}[theorem]{Theorem}
\newtheorem{Conj}[theorem]{Conjecture}
\newtheorem{Cor}[theorem]{Corollary}
\newtheorem{Prop}[theorem]{Proposition}
\newtheorem{Lem}[theorem]{Lemma}
\theoremstyle{remark}
\newtheorem{Def}[theorem]{Definition}
\newtheorem{Rmk}[theorem]{Remark}
\newcommand{\Hom}{\operatorname{Hom}}
\newcommand{\GL}{\operatorname{GL}}
\newcommand{\SL}{\operatorname{SL}}
\newcommand{\Sym}{\operatorname{Sym}}
\newcommand{\Ind}{\operatorname{Ind}}
\newcommand{\ind}{\operatorname{ind}}
\newcommand{\Para}{\operatorname{Para}} 
\newcommand{\HC}{\operatorname{HC}}
\newcommand{\C}{\mathbb C}
\newcommand{\Z}{\mathbb{Z}}
\newcommand{\R}{\mathbb{R}}
\newcommand{\bm}{\begin{multline*}}
\newcommand{\tu}{\end  {multline*}}
\newcommand{\Smod}{\mathcal{S}\mathrm{mod}}
\newcommand{\SH}{{\rm H}}
\newcommand{\Speh}{{\rm Speh}}
\newcommand{\Gr}{{\rm Gr}}
\newcommand{\std}{{\rm std}}
\def\calE{\mathcal{E}}
\def\calN{\mathcal{N}}
\def\calO{\mathcal{O}}
\def\calS{\mathcal{S}}
\def\calV{\mathcal{V}}
\def\calX{\mathcal{X}}
\def\calZ{\mathcal{Z}}
\def\scrL{\mathscr{L}}
\def\gothg{\mathfrak{g}}
\def\lra{\longrightarrow}
\title[Proof of non-tempered GGP: Archimedean general linear case]
{Non-tempered Gan--Gross--Prasad conjecture for Archimedean general linear groups}
\author{Cheng Chen and Rui Chen} 
\address{Institut de Math\'ematiques de Jussieu--Paris Rive Gauche, B\^atiment Sophie Germain, 8 place Aur\'elie Nemours, 75013 Paris}
\email{cheng.chen@imj-prg.fr}
\address{School of Mathematical Sciences, Zhejiang University, Zijingang Campus, 866 Yuhangtang Road, Hangzhou 310058, China}
\email{rchenmat@zju.edu.cn}
\begin{document}

\begin{abstract}
The local non-tempered Gan--Gross--Prasad conjecture 
suggests that, for a pair of irreducible Arthur type representations of two successive general linear groups, they have non-zero Rankin--Selberg period if and only if they are ``relevant''. In this paper, we prove the ``period implies relevance'' direction of this conjecture for general linear groups over Archimedean local fields. 
Our proof is based on a previous result of Gourevitch--Sayag--Karshon on the annihilator varieties of distinguished representations. 
Combining with a recent result of P. Boisseau on the direction ``relevance implies period'', 
this conjecture for general linear groups over Archimedean local fields is settled.
\end{abstract}

\maketitle

\section{Problem and Set up} 
\label{sec:problem_and_set_up}

A much-studied branching problem in the representation theory of reductive groups over a local field $F$ of characteristic zero is the so-called (corank one) Rankin--Selberg model, also known as the general linear Gan--Gross--Prasad model, which concerns two successive general linear groups. Let $G_n=\GL_n\left(F\right)$ be the general linear group of rank $n$. Regard $G_n$ as a subgroup of $G_{n+1}$ via the map
\[
    g\longmapsto \left(\begin{array}{cc}
                            g & ~\\
                            ~ & 1
                        \end{array}\right)
\]
for $g\in G_n$. One would like to determine the space
\[
    \Hom_{G_n}\left(\pi,\sigma\right)
\]
for irreducible representations $\pi$ of $G_{n+1}$ and $\sigma$ of $G_n$. It is well-known that this Hom space has dimension at most one \cite{MR2529937} \cite{MR2680495} \cite{MR2874638}. Moreover, by the work of Jacquet--Piatetskii-Shapiro--Shalika \cite{MR701565} and Jacquet \cite{MR2533003}, one knows that when $\pi$ and $\sigma$ are both generic, then the Hom space is always non-zero.

\vskip 5pt

To understand the Hom space when $\pi$ and $\sigma$ are not necessarily generic, recently Gan, Gross and Prasad proposed a conjecture \cite[Conj. 5.1]{MR4190046} for representations of \textit{Arthur type}. They introduced a mysterious notion called ``\textit{relevance}'' of two A-parameters, and conjectured that when $\pi$ and $\sigma$ are of Arthur type, the Hom space is non-zero if and only if their A-parameters are relevant. Soon after, the non-Archimedean case of this conjecture has been settled: M. Gurevich proved one direction in \cite{MR4375451}, and Chan proved the conjecture in full generality \cite{MR4373242}. Both these works use the Bernstein--Zelevinsky derivative as an important tool. Later, Chan also extended his result to all irreducible representations, see \cite{chan2022quotient} for more details.

\vskip 5pt

However, for the Archimedean case, the conjecture is still open. One reason is that, in the Archimedean case, the theory of Bernstein--Zelevinsky derivative is much more complicated. In \cite[Thm. E]{MR4291954}, Gourevitch, Sayag, and Karshon obtained some partial results from a different angle. By investigating the annihilator variety, they provided a ``\textit{micro-local}'' condition for an irreducible Casselman--Wallach representation to be distinguished. Since annihilator varieties of irreducible unitary representations determine their Arthur $\SL_2$-types \cite[Thm. B]{MR3029948}, the conclusion of \cite[Thm. E]{MR4291954} can be translated into the language of A-parameters; hence, it provided affirmative evidence for the Archimedean case of the non-tempered Gan--Gross--Prasad conjecture.

\vskip 5pt

The main purpose of this short paper is to upgrade the result of Gourevitch--Sayag--Karshon \cite[Thm. E]{MR4291954} to a proof for the non-tempered Gan--Gross--Prasad conjecture \cite[Conj. 5.1]{MR4190046}, \textit{without} using the theory of derivatives. Our main tool is the Mackey theory (or, more precisely, Borel's lemma). To describe our results, we need to make some conventions. From now on, let $F$ be an Archimedean local field, i.e. $F=\R$ or $\C$. We shall work in the setting of \textit{almost linear Nash groups} as in \cite{MR4211018}. By ``representations'' of a Nash group,  we mean ``smooth Fr\'echet representations of moderate growth.'' In this paper we shall mainly deal with \textit{unitary} representations. As usual, here the terminology ``unitary representation'' really means ``unitarizable smooth Fr\'echet representations of moderate growth.'' The main result of this paper is the ``period implies relevance'' direction of \cite[Conj. 5.1]{MR4190046}.

\begin{Thm}\label{T:MainNTGGPGL}
Let $\pi$ and $\sigma$ be irreducible unitary representations of $G_{n+1}$ and $G_n$, respectively.  
If the Hom space 
    \[
        \Hom_{G_n}\left(\pi,\sigma\right) \neq 0,
    \]
    then $\pi$ and $\sigma$ are relevant as in Definition \ref{D:unitaryRelevant}.
\end{Thm}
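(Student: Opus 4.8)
The plan is to combine the classification of the unitary dual of $\GL_m(F)$ with the microlocal criterion of Gourevitch--Sayag--Karshon, and then to close the gap between that criterion and relevance by a Mackey-theoretic induction. First I would recall Vogan's classification of the unitary dual: every irreducible unitary representation of $\GL_m(F)$ ($F=\R$ or $\C$) is an isobaric sum of Speh representations and their complementary-series deformations, so it carries a well-defined $A$-parameter up to such deformations; write $\psi_\pi=\bigoplus_{i}\phi_i\boxtimes S_{a_i}$ and $\psi_\sigma=\bigoplus_{j}\phi'_j\boxtimes S_{b_j}$, where the $\phi_i,\phi'_j$ are bounded (tempered) parameters and $S_a$ is the $a$-dimensional representation of the Arthur $\SL_2$. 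Unwinding Definition~\ref{D:unitaryRelevant}, what must be shown is that, after reindexing, $\psi_\pi$ and $\psi_\sigma$ break into ``relevant chains'': the tempered blocks $\phi_i$ must be matched with tempered blocks $\phi'_j$ of $\sigma$, and the corresponding $\SL_2$-ranks must be adjacent, $|a_i-b_j|=1$ (up to the standard bookkeeping for repeated blocks and complementary-series twists).

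Next I would extract the consequence of $\Hom_{G_n}(\pi,\sigma)\neq0$ coming from \cite[Thm.~E]{MR4291954}. Writing $\Hom_{G_n}(\pi,\sigma)=\Hom_{G_n}(\pi\,\widehat{\otimes}\,\sigma^{\vee},\C)$, the representation $\pi\,\widehat{\otimes}\,\sigma^{\vee}$ of $\GL_{n+1}\times\GL_n$ is distinguished by the diagonally embedded $G_n$, which is the Gan--Gross--Prasad spherical pair; hence its annihilator variety $\mathcal{V}(\pi)\times\mathcal{V}(\sigma)$ inside the nilpotent cone of $\mathfrak{gl}_{n+1}\oplus\mathfrak{gl}_n$ must meet the conormal variety attached to that pair. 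Because $\pi$ and $\sigma$ are irreducible unitary, \cite[Thm.~B]{MR3029948} identifies $\mathcal{V}(\pi)$ and $\mathcal{V}(\sigma)$ with the nilpotent orbit closures determined by the Arthur $\SL_2$-types: $\mathcal{V}(\pi)=\overline{\mathcal{O}_{p^{t}}}$, where $p=p(\pi)$ is the partition of $n+1$ whose parts are the $a_i$, each repeated $\dim\phi_i$ times, and $\mathcal{V}(\sigma)=\overline{\mathcal{O}_{q^{t}}}$ for the analogous partition $q=q(\sigma)$ of $n$. Running the conormality condition through this dictionary turns it into an explicit interlacing inequality between the partitions $p(\pi)$ and $q(\sigma)$, equivalently between the multisets $\{a_i\}$ and $\{b_j\}$ weighted by the $\dim\phi_i$ and $\dim\phi'_j$. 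This already pins down the ``$\SL_2$-shape'' part of relevance.

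The interlacing inequality is, however, blind to the tempered blocks $\phi_i,\phi'_j$, so it is strictly weaker than relevance -- two representations with the same Arthur $\SL_2$-type but different tempered data can pass this test while failing to be relevant. To upgrade it I would argue by induction on $n$ using Mackey theory. Realize $\pi$ (resp.\ $\sigma$) as a subquotient of a representation parabolically induced from essentially tempered data on a Levi; restrict along $G_n\hookrightarrow G_{n+1}$ and apply the geometric lemma, stratifying the flag variety by $G_n$-orbits. The restriction then acquires a finite $G_n$-stable filtration whose graded pieces are representations of $G_n$ induced from the subgroups $G_n\cap{}^{w}P$, twisted by symmetric powers of the relevant normal bundles; here Borel's lemma is what guarantees that a continuous $G_n$-equivariant functional on $\pi$ descends to the associated graded and is in fact detected on a single stratum, with only finitely many transversal jets contributing. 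Analysing that stratum reduces $\Hom_{G_n}(\pi,\sigma)\neq0$ to a Rankin--Selberg problem for strictly smaller general linear groups together with an equality of the ``extremal'' tempered data; feeding in the interlacing constraint of the previous step forces the surviving configuration to be precisely a relevant chain, and the inductive hypothesis closes the loop.

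I expect the third step to be the main obstacle. The criterion of \cite[Thm.~E]{MR4291954} is purely about annihilator varieties and therefore cannot by itself see the tempered data, whereas relevance can; making the Mackey/Borel reduction interact correctly with the microlocal constraint -- in particular identifying and controlling the extremal tempered blocks and ruling out spurious contributions from the higher strata and their transversal jets -- is where the real work lies. The classification of the unitary dual is used throughout precisely to keep this combinatorial bookkeeping finite and rigid, and to ensure that the induction terminates.
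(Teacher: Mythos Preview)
Your proposal assembles the right ingredients --- Vogan's classification, the Gourevitch--Sayag--Karshon criterion read through the Gourevitch--Sahi dictionary, and Mackey/Borel orbit analysis --- and your diagnosis that the annihilator-variety constraint is ``blind to the tempered blocks'' is exactly the gap the paper has to close.

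Where the paper differs is in the inductive scheme and in the order in which the two inputs are used. You suggest induction on $n$, with the Mackey step producing ``strictly smaller general linear groups''. The paper instead inducts on the non-temperedness count $NT_\pi+NT_\sigma$, and its reduction step \emph{stays on the same} $G_{n+1}$ and $G_n$: one peels the largest block $\scrL_1\boxtimes S_{p_1}$ off $\pi$ by writing the corresponding Speh (or complementary-series) factor as a quotient of $\rho|\det|^{(p_1-1)/2}\times\pi_0\times R(\rho)|\det|^{-(p_1-1)/2}$, restricts to the mirabolic, and kills every closed-orbit jet via a Langlands-quotient vanishing lemma (Lemma~\ref{L:vanishingLem1}) --- the exponents on those jets dominate every exponent in the standard module of $\sigma$, so no Hom survives. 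The open orbit gives a Fourier--Jacobi model, converted back to corank-one Bessel by inserting a \emph{freely chosen} unitary character (Lemma~\ref{L:FJToBessel}); an MVW flip then repeats the trick on the $R(\rho)$ side. The outcome is $\Hom_{G_n}(\mathrm{Ps}_{2k}\times\pi_0,\sigma)\neq0$, with $\mathrm{Ps}_{2k}$ a unitary principal series whose characters avoid all blocks of $\pi$ and $\sigma$, and $NT_{\mathrm{Ps}_{2k}\times\pi_0}<NT_\pi$.

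The interaction with GSK is also reversed relative to your sketch. By the induction hypothesis $\mathrm{Ps}_{2k}\times\pi_0$ and $\sigma$ are relevant; a combinatorial characterisation of relevance (Lemma~\ref{L:RelevantCombinatorial}) then transfers every required inequality $\Lambda(\eta,a;\cdot,\cdot)\ge0$ to $(\pi,\sigma)$ \emph{except} the single one $\Lambda(\scrL_1,p_1-1;\sigma,\pi)\ge0$. To secure that, the same peeling is repeated until all top-size blocks on both sides carry the label $\scrL_1$ (in the boundary case $p_1=2$ one passes to a higher-corank model via Proposition~\ref{P:ReductionToCorank1} to strip irrelevant generic blocks of $\sigma$); at that point the GSK ``closeness'' $|TP(\pi)_i-TP(\sigma)_i|\le1$ is literally the missing multiplicity inequality. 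So the paper uses Mackey first to isolate one scalar inequality and then GSK to supply it, whereas you propose GSK first and Mackey to refine. Your route may be workable, but the specific device that makes the paper's argument close --- the freedom in choosing the principal-series characters so that the relevance combinatorics are untouched away from the top block, together with Lemma~\ref{L:RelevantCombinatorial} --- is the idea your outline does not yet contain.
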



\vskip 5pt

Here as stated in the theorem, we extend the notion of ``relevance'' to all irreducible unitary representations, not only those of Arthur type.
We should mention that if both $\pi$ and $\sigma$ are of Arthur type, then the notion of ``relevance'' in Definition \ref{D:unitaryRelevant} is the same as the one defined in \cite[Sect. 3]{MR4190046}. To prove the other direction of \cite[Conj. 5.1]{MR4190046}, namely, ``relevance implies period'', one can employ the integral method. This has been achieved in the thesis of P. Boisseau \cite{thesisboisseau} recently. Therefore, combining our result with P. Boisseau's result, we know that the non-tempered Gan--Gross--Prasad conjecture holds for Archimedean general linear groups.

\begin{Cor}
Let $\pi$ and $\sigma$ be irreducible Arthur type representations of $G_{n+1}$ and $G_n$ respectively.  
Then the Hom space 
    \[
        \Hom_{G_n}\left(\pi,\sigma\right)
    \]
is non-zero if and only if $\pi$ and $\sigma$ are relevant. 
\end{Cor}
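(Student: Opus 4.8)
The plan is to obtain the corollary by combining the two directions, each of which is available separately. For the ``period implies relevance'' implication I would invoke Theorem \ref{T:MainNTGGPGL} directly: an irreducible representation of Arthur type is in particular irreducible and unitary, so if $\Hom_{G_n}(\pi,\sigma)\neq 0$ then $\pi$ and $\sigma$ are relevant in the sense of Definition \ref{D:unitaryRelevant}. To match this with the statement of \cite[Conj. 5.1]{MR4190046} one then checks that, for representations of Arthur type, Definition \ref{D:unitaryRelevant} recovers the notion of relevance of A-parameters defined in \cite[Sect. 3]{MR4190046}; this is a direct comparison of the two definitions (matching the $\Speh$ constituents, equivalently the Jordan blocks of the Arthur $\SL_2$, against the combinatorial data in Definition \ref{D:unitaryRelevant}), and it is recorded in the discussion following Theorem \ref{T:MainNTGGPGL}.

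For the reverse ``relevance implies period'' implication I would cite the thesis of P. Boisseau \cite{thesisboisseau}, which proves exactly that relevance of the A-parameters of $\pi$ and $\sigma$ forces the Rankin--Selberg period to be non-zero. The argument there proceeds by the integral method: one produces an explicit $G_n$-invariant functional from a (suitably regularized) Rankin--Selberg zeta integral attached to the $\Speh$ representations occurring in the relevant pair, and shows it is not identically zero, ultimately reducing to the non-vanishing in the generic case established by Jacquet--Piatetskii-Shapiro--Shalika \cite{MR701565} and Jacquet \cite{MR2533003}. Taking this as input, the two implications together yield the asserted equivalence.

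Thus the only work beyond citing Theorem \ref{T:MainNTGGPGL} and \cite{thesisboisseau} is bookkeeping: verifying that ``Arthur type'' entails ``irreducible unitary'', so that Theorem \ref{T:MainNTGGPGL} applies, and that the two a priori distinct notions of relevance agree on Arthur type parameters, so that the output of Theorem \ref{T:MainNTGGPGL} is precisely the hypothesis of Boisseau's theorem. I expect this compatibility check to be the only mildly delicate point, and an entirely combinatorial one; once it is in place the corollary is immediate.
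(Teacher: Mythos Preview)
Your proposal is correct and matches the paper's own treatment exactly: the corollary is obtained by combining Theorem \ref{T:MainNTGGPGL} for the ``period implies relevance'' direction with Boisseau's thesis \cite{thesisboisseau} for the converse, together with the observation (recorded after Theorem \ref{T:MainNTGGPGL}) that Definition \ref{D:unitaryRelevant} agrees with \cite[Sect.~3]{MR4190046} for Arthur type representations. No additional argument is given or needed in the paper.
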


\vskip 5pt

Although not specifically noted in \cite{MR4190046}, one can also formulate and prove analogous conjectures for Fourier--Jacobi models and higher corank Rankin--Selberg models by using the connection of these models to the corank one Rankin--Selberg model (see Lemma \ref{L:FJToBessel} and Proposition \ref{P:ReductionToCorank1} below).

\vskip 5pt


The paper is organized as follows.
We first recall the basic setup of Schwartz analysis and Borel's lemma in Section \ref{sec:schwartz_homology}, and prove two useful lemmas (Lemma \ref{L:inftesimalArguement} and Lemma \ref{L:vanishingLem1}). These lemmas allow us to conclude certain Schwartz homologies and Hom spaces are vanishing.
In Section \ref{sec:tadic_vogan_classification}, based on Vogan's classification of the unitary dual of Archimedean general linear groups, we extend the definition of relevance to irreducible unitary representations (Definition \ref{D:unitaryRelevant}). After that, a combinatorial description of relevance (Lemma \ref{L:RelevantCombinatorial}) will be given.
In Section \ref{sec:orbit_analysis_and_higher_corank_models}, we investigate how a parabolic induced representation decomposes when restricted to the mirabolic subgroup, by doing some orbit analysis. As applications, we relate Fourier--Jacobi models and higher corank Rankin--Selberg models to the corank one Rankin--Selberg model in Lemma \ref{L:FJToBessel} and Proposition \ref{P:ReductionToCorank1} respectively. All these models will appear in the proof of Theorem \ref{T:MainNTGGPGL}. 
Finally, combining these results with the key input \cite[Thm. E]{MR4291954}, we prove Theorem \ref{T:MainNTGGPGL} in Section \ref{sec:proof_of_the_main_result_i_period_implies_relevance}.

\vskip 5pt

\subsection*{Notations} 
\label{sub:notations}

We denote by $L_F$ the Weil group associated to $F$, and by $|\cdot|$ the normalized absolute value of $F$. For each positive integer $d$, let $S_d$ be the unique irreducible $d$-dimensional (algebraic) representation of $\SL_2\left(\C\right)$. We use ${\rm Mat}_{a,b}\left(F\right)$ to denote the space $a\times b$-matrices over $F$, and by $I_a$ the $a\times a$ identity matrix. The determinant of $G_n$ will be denoted by $\det_n$. When $n$ is clear from the context we may suppress it from the subscript. Let $M_{n}$ be the usual mirabolic subgroup of $G_n$ consisting of elements of the form
\[
    \left(\begin{array}{cc}
                            g & v\\
                            ~ & 1
                    \end{array}\right)
\]
for $g\in G_{n-1}$ and $v\in F^{n-1}$. We also denote by $\omega_n$ the (normalized) Weil representation of $G_n$, i.e. the space of Schwartz functions $\calS\left(F^n\right)$ equipped with the action 
    \[
        \left(g\cdot\varphi\right) \left(v\right) = |\det g|^{-\frac{1}{2}}\cdot\varphi\left(g^{-1}v\right)
    \]
for $g\in G_n$ and $v\in F^n$. For two representations $\pi$ of $G_n$ and $\sigma$ of $G_m$, we use the standard notation $\pi\times\sigma$ to denote the normalized parabolic induction of $\pi\boxtimes\sigma$ to $G_{n+m}$ with respect to the standard parabolic subgroup whose Levi component is $G_n\times G_m$. All tensors in this paper should be understood as completed tensors.


\vskip 5pt

\subsection*{Acknowledgements}

This project was initiated during the scientific programme `Representation Theory and Noncommutative Geometry', supported by the Institut Henri Poincaré (UAR 839 CNRS-Sorbonne Université), and LabEx CARMIN (ANR-10-LABX-59-01). We would like to thank Paul Boisseau for helpful discussions and explaining to us his thesis work. We also thank Kei Yuen Chan and Maxim Gurevich for several email correspondences.
The first author is supported by the European Union’s Horizon 2020 research and innovation programme under the Marie Skłodowska-Curie grant agreement No. 101034255. 

\vskip 10pt

\section{Preparations for the proof} 
\label{sec:schwartz_homology}

\subsection{Schwartz analysis and Borel's lemma} 
\label{sub:toolkits}

As mentioned in the previous section, we work in the setting of almost linear Nash groups.  In the sense of \cite{MR4211018}, an almost linear Nash group is a group with a Nash structure, which admits a homomorphism to some general linear group with a finite kernel. All groups we are working with in this paper (general linear groups and their closed subgroups) belong to this class of groups.

\vskip 5pt

Let $G$ be an almost linear Nash group, we denote by $\Smod\left(G\right)$ the category of smooth Fr\'echet representations of moderate growth of $G$. For a representation $\left(\pi, V\right)$ of $G$, we put
\[
    V_G = V\big/\left\langle \pi\left(g\right)v-v~\big|~g\in G,~v\in V\right\rangle,
\]
equipped with the quotient topology. It might not be Hausdorff. The functor $V\longmapsto V_G$ is a right exact functor from $\Smod\left(G\right)$ to the category of topological vector spaces. The Schwartz homology $\SH_i\left(G,V\right)$ is by definition the $i$-th derived functor of it.

\vskip 5pt

Let $H$ be a Nash subgroup of $G$, and $\left(\pi_H, V\right)$ be a representation of $H$. Then there is a continuous map
\[
    I: \mathcal{S}\left(G,V\right) \longrightarrow C^\infty\left(G,V\right), \quad  f\longmapsto \left(g\mapsto \int_H \pi_H(h)f\left(h^{-1}g\right)\,dh\right)
\]
from the space of Schwartz functions $\mathcal{S}\left(G,V\right)$ to the space of smooth functions $C^\infty\left(G,V\right)$, with image landing in the usual un-normalized induction $\leftindex^u{\Ind}_H^G V$. The image of $I$, equipped with the quotient topology of $\mathcal{S}\left(G,V\right)$, is called the Schwartz induction of $\pi_H$ and will be denoted by $\leftindex^u\ind_H^G V$.

\begin{Rmk}
The following two facts are worth noting:

\begin{itemize}
\item if $H$ is a parabolic subgroup, then, as $H\backslash G$ is compact, the Schwartz induction $\leftindex^u\ind_H^G V$ is the usual un-normalized parabolic induction;
\vskip 5pt

\item the Schwartz induction $\leftindex^u\ind_H^G V$ can be also intepreted as the space of Schwartz sections over the vector bundle
\[
    H\backslash (G\times V) \longrightarrow H\backslash G,
\]
where $H$ acts on $G\times V$ diagonally.
\end{itemize}
\end{Rmk}

\vskip 5pt

The next proposition is a version of Frobenius reciprocity in the setting of Schwartz analysis. Readers may consult \cite[Prop. 6.6, Thm. 6.8]{MR4211018} for more details.

\begin{Prop}\label{P:Frobenious}
Let $V$ be a representation of $H$. Then
\[
   \left(\left(\leftindex^u\ind_H^G \left( V\cdot \delta_H\right)\right)\cdot\delta_G^{-1}\right)_G \simeq V_H,
\]
where $\delta_H$ and $\delta_G$ are the modulus characters of $H$ and $G$, respectively.
\end{Prop}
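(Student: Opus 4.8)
The plan is to realize the coinvariants functor $(-)_G$ as the composition of Schwartz induction with the coinvariants functor for $H$, up to the bookkeeping of modulus characters, and then to read off the asserted isomorphism. First I would recall the interpretation of Schwartz induction from the Remark: $\leftindex^u\ind_H^G W$ is the space of Schwartz sections of the vector bundle $H\backslash(G\times W)\to H\backslash G$ with $H$ acting diagonally. Equivalently, $\leftindex^u\ind_H^G W$ is a quotient of $\mathcal{S}(G,W)$ via the averaging map $I$, where one integrates over $H$ against $\pi_H$. The key structural fact I would invoke is that, for the group $G$ itself acting by right translation, $\mathcal{S}(G,W)$ is (topologically, as a $G$-representation) the Schwartz induction $\leftindex^u\ind_{\{e\}}^G W$ from the trivial subgroup, and more relevantly that $\mathcal{S}(G) \cong \mathcal{S}(G/H) \widehat{\otimes} \mathcal{S}(H)$-type decompositions let one compute $(-)_G$ of a Schwartz-induced module by first integrating over $H\backslash G$ (which is where the $\delta_G$ twist enters, since Haar measure on $G$ is not bi-invariant) and then over $H$.

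Concretely, the steps I would carry out are: (1) Write $A := \leftindex^u\ind_H^G(V\cdot\delta_H)$. Using the quotient presentation $\mathcal{S}(G, V\cdot\delta_H) \twoheadrightarrow A$, together with exactness properties of the coinvariants functor on Schwartz spaces, express $(A\cdot\delta_G^{-1})_G$ as a quotient of $\bigl(\mathcal{S}(G,V\cdot\delta_H)\cdot\delta_G^{-1}\bigr)_G$. (2) Identify $\bigl(\mathcal{S}(G,V)\bigr)_G$, for the left-translation-and-$\pi_H$-twisted action, with the fiber $V$ at the identity coset — this is the Schwartz-analysis analogue of the classical fact that $C_c^\infty(G)$ with the regular representation has one-dimensional coinvariants. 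The twist by $\delta_H$ on the coefficients and $\delta_G^{-1}$ on the induced module is exactly what makes the $G$-action unimodular-compatible, so that integrating the Schwartz function over $G$ is well defined on coinvariants and factors as integration over $H\backslash G$ followed by integration over $H$. (3) After integrating out the $H\backslash G$-direction, what remains is precisely $\mathcal{S}(H, V)$ with the $\pi_H$-twisted left regular action, whose $H$-coinvariants are $V_H$. Chasing the identifications (and checking that all the $\delta$'s cancel) yields $(A\cdot\delta_G^{-1})_G \cong V_H$.

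The main obstacle, I expect, is not the formal structure but making the measure-theoretic and topological bookkeeping precise: one must verify that the averaging map $I$ is a topological quotient map with the stated image, that the coinvariants functor commutes appropriately with the relevant completed tensor product decompositions of Schwartz spaces (so that "integrate over $H\backslash G$ first, then over $H$" is legitimate at the level of non-Hausdorff quotients), and that the modulus characters enter with exactly the exponents claimed — this is where sign/normalization errors creep in. For all of this I would lean on the foundational results of \cite{MR4211018}, in particular their development of Schwartz induction and the comparison isomorphisms cited as \cite[Prop. 6.6, Thm. 6.8]{MR4211018}; the proposition as stated is essentially a repackaging of those results, so the "proof" is really a careful citation together with the dualization $V \leftrightarrow V\cdot\delta_H$, $G$-module $\leftrightarrow$ $G$-module$\cdot\delta_G^{-1}$ that converts their normalized statement into the un-normalized form needed later.
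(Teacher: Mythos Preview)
Your proposal is correct and lands exactly where the paper does: the proposition is stated without an independent proof and is attributed directly to \cite[Prop.~6.6, Thm.~6.8]{MR4211018}. Your heuristic sketch of how the argument goes (integrate over $H\backslash G$, then over $H$, with the modulus twists making the measures match) is reasonable background, but the paper itself offers no argument beyond the citation, so your concluding sentence---that the ``proof'' is really a careful citation---is precisely the paper's stance.
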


\vskip 5pt

In later proofs we also need to do some orbit analysis. We now introduce a version of Borel's lemma, following \cite[Prop. 8.2, 8.3]{MR4211018} and \cite[Prop. 2.5]{xue2020besselShomology}. Let $\mathcal{X}$ be a Nash manifold and $\mathcal{Z}$ a closed Nash submanifold. We put $\mathcal{U} = \mathcal{X} \backslash \mathcal{Z}$. Let $\mathcal{E}$ be a tempered vector bundle over $\mathcal{X}$, then the ``extension by zero'' of Schwartz sections over $\mathcal{U}$ gives us the following exact sequence:
\[
    0 \longrightarrow \Gamma\left(\mathcal{U}, \mathcal{E}\right) \longrightarrow \Gamma\left(\mathcal{X}, \mathcal{E}\right) \longrightarrow \Gamma_{\mathcal{Z}}\left(\mathcal{X}, \mathcal{E}\right) \longrightarrow 0,
\]
where the symbol $\Gamma$ stands for taking Schwartz sections, and
\[
    \Gamma_{\mathcal{Z}}\left(\mathcal{X}, \mathcal{E}\right) := \Gamma\left(\mathcal{X}, \mathcal{E}\right)\big/\Gamma\left(\mathcal{U}, \mathcal{E}\right).
\]
Let $N_{\mathcal{Z}|\mathcal{X}}^\vee$ 
 be the conormal bundle of $\mathcal{Z}$ and $\mathcal{N}_{\mathcal{Z}|\mathcal{X}}^\vee$ the complexification of $N_{\mathcal{Z}|\mathcal{X}}^\vee$.

\begin{Prop}\label{P:BorelLemma}
There is a descending filtration on $\Gamma_{\mathcal{Z}}\left(\mathcal{X}, \mathcal{E}\right)$ indexed by $k\in\mathbb{N}:$
\[
    \Gamma_{\mathcal{Z}}\left(\mathcal{X}, \mathcal{E}\right)=\Gamma_{\mathcal{Z}}\left(\mathcal{X}, \mathcal{E}\right)_0 \supset \Gamma_{\mathcal{Z}}\left(\mathcal{X}, \mathcal{E}\right)_1 \supset \Gamma_{\mathcal{Z}}\left(\mathcal{X}, \mathcal{E}\right)_2 \supset \cdots,
\]
such that
\[
    \Gamma_{\mathcal{Z}}\left(\mathcal{X}, \mathcal{E}\right) = \lim_{\longleftarrow} \Gamma_{\mathcal{Z}}\left(\mathcal{X}, \mathcal{E}\right)\big/\Gamma_{\mathcal{Z}}\left(\mathcal{X}, \mathcal{E}\right)_k,
\]
with each graded piece isomorphic to
\[
    \Gamma\left(\mathcal{Z}, \Sym^k\mathcal{N}_{\mathcal{Z}|\mathcal{X}}^\vee\otimes\mathcal{E}\big|_\mathcal{Z}\right), \quad k\in\mathbb{N}.
\]
Moreover, if $\mathcal{X}$ is a $G$-Nash manifold, $\mathcal{Z}$ is stable under the $G$-action, and $\mathcal{E}$ is a tempered $G$-bundle, then this filtration is stable under the $G$-action.

\end{Prop}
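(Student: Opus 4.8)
The plan is to establish Borel's lemma on the graded structure of $\Gamma_{\mathcal{Z}}(\mathcal{X},\mathcal{E})$ by reducing, via local Nash triviality, to the model situation $\mathcal{X} = \mathcal{Z}\times \R^c$ with $\mathcal{Z}$ embedded as $\mathcal{Z}\times\{0\}$ and $\mathcal{E}$ a trivial bundle. In that model, Schwartz sections over $\mathcal{X}$ are $\mathcal{S}(\mathcal{Z}\times\R^c,\mathcal{E})$, and the restriction map to sections over $\mathcal{U}=\mathcal{X}\setminus\mathcal{Z}$ has kernel precisely the Schwartz functions vanishing to infinite order along $\mathcal{Z}$; hence $\Gamma_{\mathcal{Z}}(\mathcal{X},\mathcal{E})$ is the quotient of $\mathcal{S}(\mathcal{Z}\times\R^c,\mathcal{E})$ by that ideal. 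The $k$-th filtration piece $\Gamma_{\mathcal{Z}}(\mathcal{X},\mathcal{E})_k$ should then be taken to be (the image of) Schwartz functions vanishing to order $\geq k$ transverse to $\mathcal{Z}$, i.e. all of whose normal derivatives of total order $< k$ vanish along $\mathcal{Z}$.

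**First I would** verify that the normal Taylor expansion along $\mathcal{Z}$ — sending a Schwartz section to the tuple of its transverse derivatives of order $k$ restricted to $\mathcal{Z}$ — induces an isomorphism from the graded piece $\Gamma_{\mathcal{Z}}(\mathcal{X},\mathcal{E})_k/\Gamma_{\mathcal{Z}}(\mathcal{X},\mathcal{E})_{k+1}$ onto $\Gamma(\mathcal{Z},\operatorname{Sym}^k\mathcal{N}_{\mathcal{Z}|\mathcal{X}}^\vee\otimes\mathcal{E}|_{\mathcal{Z}})$. Surjectivity is the classical Borel extension statement in the Schwartz category: any prescribed jet data extends to a genuine Schwartz function — this is exactly the content of \cite[Prop. 8.2, 8.3]{MR4211018}, and it is the technical heart. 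Injectivity is essentially the definition of the filtration. The identification of the target with symmetric powers of the conormal bundle is the usual coordinate-free reading of the space of homogeneous-degree-$k$ transverse Taylor coefficients. The completeness statement $\Gamma_{\mathcal{Z}}(\mathcal{X},\mathcal{E}) = \varprojlim \Gamma_{\mathcal{Z}}(\mathcal{X},\mathcal{E})/\Gamma_{\mathcal{Z}}(\mathcal{X},\mathcal{E})_k$ then follows because a Schwartz function vanishing to infinite order along $\mathcal{Z}$ already represents $0$ in $\Gamma_{\mathcal{Z}}$, so the intersection $\bigcap_k \Gamma_{\mathcal{Z}}(\mathcal{X},\mathcal{E})_k$ is zero, together with the fact that each quotient is the corresponding finite jet space and these are compatible.

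**Next I would** globalize: a general Nash submanifold $\mathcal{Z}\subset\mathcal{X}$ admits a Nash tubular neighborhood Nash-diffeomorphic to a neighborhood of the zero section in the normal bundle $N_{\mathcal{Z}|\mathcal{X}}$ (semialgebraic tubular neighborhood theorem), and Schwartz sections localize in a controlled way, so the filtration, the graded pieces, and the projective-limit description all glue from the model computation. Here one must be mildly careful that the tubular neighborhood chart is only defined near $\mathcal{Z}$, but sections over $\mathcal{X}$ restrict to sections over the tube and the quotient $\Gamma_{\mathcal{Z}}$ only sees behavior near $\mathcal{Z}$, so nothing is lost; one invokes a Schwartz partition of unity subordinate to a cover trivializing $\mathcal{E}$ and flattening $\mathcal{Z}$.

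**The main obstacle**, and the step I would spend the most care on, is the equivariance claim at the end: if $\mathcal{X}$ is a $G$-Nash manifold, $\mathcal{Z}$ is $G$-stable, and $\mathcal{E}$ is a tempered $G$-bundle, then the filtration is $G$-stable. The subtlety is that the filtration was defined using a choice of tubular neighborhood / local coordinates, which need not be $G$-invariant; so one must argue that the subspaces $\Gamma_{\mathcal{Z}}(\mathcal{X},\mathcal{E})_k$ admit an intrinsic, coordinate-free description — namely as the image of those Schwartz sections whose $(k-1)$-jet transverse to $\mathcal{Z}$ vanishes, equivalently sections lying in $\mathcal{I}_{\mathcal{Z}}^k\cdot\mathcal{S}(\mathcal{X},\mathcal{E})$ where $\mathcal{I}_{\mathcal{Z}}$ is the ideal of Schwartz functions vanishing on $\mathcal{Z}$. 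Since $G$ preserves $\mathcal{Z}$ it preserves $\mathcal{I}_{\mathcal{Z}}$, hence each power $\mathcal{I}_{\mathcal{Z}}^k$, hence the filtration; and the $G$-action on $\mathcal{E}$ being by a tempered bundle map ensures the action stays within $\Smod$. The graded pieces then inherit the natural $G$-action on $\Gamma(\mathcal{Z},\operatorname{Sym}^k\mathcal{N}_{\mathcal{Z}|\mathcal{X}}^\vee\otimes\mathcal{E}|_{\mathcal{Z}})$ coming from the induced $G$-action on the conormal bundle. This is proved in essentially this form in \cite{MR4211018} and in \cite[Prop. 2.5]{xue2020besselShomology}, and I would cite those for the delicate Schwartz-topology bookkeeping rather than reproduce it.
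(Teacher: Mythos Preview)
The paper does not prove this proposition at all: it is stated as a citation of \cite[Prop.~8.2, 8.3]{MR4211018} and \cite[Prop.~2.5]{xue2020besselShomology}, with no accompanying argument. Your outline is the standard proof strategy found in those references (local model via Nash tubular neighborhood, normal Taylor expansion identifying graded pieces, intrinsic description of the filtration via powers of the vanishing ideal for $G$-equivariance), so there is nothing to compare; if anything, you have supplied more than the paper does.
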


\vskip 5pt

\subsection{Infinitesimal character argument and a vanishing lemma} 
\label{sub:two_vanishing_lemmas}


Finally, we introduce two lemmas, which will help us to conclude certain Hom spaces or Schwartz homologies are zero. Recall that for any irreducible representation $\pi$ of $G_n$, there exists a parabolic induction
\[
    \tau_1|{\det}_{d_1}|^{s_1}\times\tau_2|{\det}_{d_2}|^{s_2}\times\cdots\times\tau_r|{\det}_{d_r}|^{s_r},
\]
where $\tau_i$ are unitary discrete series of $G_{d_i}$, and 
\[
    s_1\geq s_2\geq\cdots\geq s_r
\] 
are real numbers, such that $\pi$ is the unique quotient of this parabolic induction, called the ``\textit{Langlands quotient}''. We denote it by $\pi = LQ\left(\tau_1|{\det}_{d_1}|^{s_1}\times\tau_2|{\det}_{d_2}|^{s_2}\times\cdots\times\tau_r|{\det}_{d_r}|^{s_r}\right)$. It is known that $\pi$ occurs in this parabolic induction with multiplicity one. We first prove a result for Schwartz homologies of certain parabolic induced representations, which is based on a standard infinitesimal character argument.

\begin{Lem}\label{L:inftesimalArguement}
Let $\pi$
be a finite length representation of $G_n$. Let $\chi_1,\chi_2,\cdots,\chi_k$ be characters of $G_1$, $F$ a finite dimensional representation of $G_k$, and $\pi_0$ a representation of $G_{n-d}$ (not necessarily of finite length). Then there exists a Zariski closed proper subset ${\rm Ex}$ of $\C^n$, such that for all $\left(s_1, s_2, \cdots, s_{k}\right) \in \C^n\,\backslash\, {\rm Ex}$, we have
\[
    \SH_i\left(G_n, \pi^\vee\otimes \left(I_{\underline{\chi}}\left(s_1,s_2,\cdots,s_{k}\right)\otimes F\right)\times\pi_0\right) = 0
\]
for all $i\geq 0$. Here $I_{\underline{\chi}}\left(s_1,s_2,\cdots,s_{k}\right)\coloneqq\chi_1|\cdot|^{s_1}\times\chi_2|\cdot|^{s_2}\times\cdots\times\chi_k|\cdot|^{s_{k}}$.
\end{Lem}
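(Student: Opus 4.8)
The plan is to strip off the parabolic induction so as to isolate the part of the infinitesimal character that genuinely varies with $(s_1,\dots,s_k)$, and then to invoke the standard vanishing of Schwartz homology between representations whose infinitesimal characters are disjoint. After Frobenius reciprocity the computation will be reduced to Schwartz homology over a product $G_k\times G_{n-k}$; on the $G_k$-factor the infinitesimal character of $I_{\underline{\chi}}(s_1,\dots,s_k)$ depends affinely and non-degenerately on $s$, while all contributions coming from $\pi$ and $\pi_0$ involve only a fixed finite set of infinitesimal characters, so generic $s$ forces vanishing and $\mathrm{Ex}$ will be a finite set.

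In detail: write $(I_{\underline{\chi}}(s)\otimes F)\times\pi_0$ as the normalized parabolic induction of $(I_{\underline{\chi}}(s)\otimes F)\boxtimes\pi_0$ from the standard parabolic $P=MN$ with $M\cong G_k\times G_{n-k}$. By the projection formula $\pi^\vee\otimes\bigl((I_{\underline{\chi}}(s)\otimes F)\times\pi_0\bigr)$ is again a parabolically induced representation, and Proposition \ref{P:Frobenious} together with exactness of parabolic induction (Shapiro's lemma for Schwartz homology) gives
\[
    \SH_i\Bigl(G_n,\ \pi^\vee\otimes\bigl((I_{\underline{\chi}}(s)\otimes F)\times\pi_0\bigr)\Bigr)\ \simeq\ \SH_i\Bigl(P,\ \pi^\vee|_P\otimes\bigl((I_{\underline{\chi}}(s)\otimes F)\boxtimes\pi_0\bigr)\otimes\delta_P^{-1/2}\Bigr)
\]
(the modulus twist is fixed and plays no role below). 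Since $N$ acts trivially on $(I_{\underline{\chi}}(s)\otimes F)\boxtimes\pi_0$ and on $\delta_P^{-1/2}$ (they are inflated from $M$), the Hochschild--Serre spectral sequence $\SH_p\bigl(M,\SH_q(N,-)\bigr)\Rightarrow\SH_{p+q}(P,-)$ and the Künneth identity $\SH_q(N,\,\pi^\vee\otimes X_0)\simeq\SH_q(N,\pi^\vee)\otimes X_0$ for $N$-trivial $X_0$ reduce the problem to computing
\[
    \SH_p\Bigl(M,\ \SH_q(N,\pi^\vee)\otimes\bigl((I_{\underline{\chi}}(s)\otimes F)\boxtimes\pi_0\bigr)\otimes\delta_P^{-1/2}\Bigr).
\]
Because $\pi$ has finite length (hence is Casselman--Wallach), each $\SH_q(N,\pi^\vee)$ is a $Z(\mathfrak{m})$-finite $M$-representation whose finite set of generalized infinitesimal characters does not depend on $s$.

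Now decompose $M=G_k\times G_{n-k}$ and take Schwartz homology in the $G_{n-k}$-variable first; this absorbs $\pi_0$ and the $G_{n-k}$-component of $\delta_P$ and leaves a $G_k$-representation $A$ which, being built from $\SH_q(N,\pi^\vee)$ by an $s$-independent procedure, is $Z(\mathfrak{g}_k)$-finite with generalized infinitesimal characters in a fixed finite set. We are reduced to showing $\SH_p\bigl(G_k,\ A\otimes(I_{\underline{\chi}}(s)\otimes F)\otimes|{\det}|^{c}\bigr)=0$ for all $p$, for a fixed constant $c$. The infinitesimal character of $I_{\underline{\chi}}(s)=\chi_1|\cdot|^{s_1}\times\cdots\times\chi_k|\cdot|^{s_k}$, viewed in $\mathfrak{h}_k^*/W$, is represented by $\lambda(s)=(a_1+s_1,\dots,a_k+s_k)$ with $a_i$ depending only on $\chi_i$; tensoring with the finite-dimensional $F$ (and with $|{\det}|^{c}$) produces a representation all of whose generalized infinitesimal characters lie, up to a fixed shift, in $\{\lambda(s)+\nu:\nu\in\mathrm{wt}(F)\}$. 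By the infinitesimal character argument --- $\SH_p(G_k,A\otimes B)=0$ whenever $A$ is $Z(\mathfrak{g}_k)$-finite and $B$ is $Z(\mathfrak{g}_k)$-finite with no generalized infinitesimal character $W$-conjugate to the contragredient of one occurring in $A$ --- vanishing holds as soon as $s$ avoids the finitely many conditions $w(\lambda(s)+\nu)=\mu$ ($w\in W$, $\nu\in\mathrm{wt}(F)$, $\mu$ in a fixed finite set). As $s\mapsto\lambda(s)$ is an affine isomorphism, each such condition picks out a single point of $\mathbb{C}^k$, so their union is the desired proper Zariski closed set $\mathrm{Ex}$.

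The step requiring the most care --- and the main potential obstacle --- is the claim that $\SH_q(N,\pi^\vee)$ is a $Z(\mathfrak{m})$-finite $M$-representation. Over a $p$-adic field this is simply finiteness of the Jacquet module, but in the Archimedean setting one has to compare Schwartz homology along $N$ with the Lie algebra homology $\mathrm{H}_q(\mathfrak{n},V)$ of the underlying Harish-Chandra module $V$ of $\pi^\vee$, where $Z(\mathfrak{m})$-finiteness is the Casselman--Osborne theorem; alternatively one could try to run the Casselman--Osborne argument (tracking how $Z(\mathfrak{g}_n)$ moves through the Koszul complex of $\mathfrak{n}$) directly at the level of Schwartz homology. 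A second, more routine point is to record the precise form of the infinitesimal character argument in the Schwartz category: namely that $\SH_\bullet(G_k,-)$ of a tensor product with a $Z(\mathfrak{g}_k)$-finite factor decomposes over generalized $Z(\mathfrak{g}_k)$-infinitesimal characters and only the summand matching the trivial representation of $G_k$ survives --- this is formal once the $Z(\mathfrak{g}_k)$-action on Schwartz homology is under control.
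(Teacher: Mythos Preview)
Your approach differs from the paper's, and the obstacle you flag is a genuine one. The claim that $\SH_q(N,\pi^\vee)$ is $Z(\mathfrak{m})$-finite---or even $Z(\mathfrak{g}_k)$-finite via the first Levi factor---is exactly the Casselman--Osborne statement transported from Lie-algebra homology to Schwartz homology, and making that transport rigorous amounts to Casselman's comparison theorem for the parabolic $P$, which the authors treat as open beyond the minimal-parabolic case and deliberately avoid. Your suggested workaround of running the Koszul/Casselman--Osborne argument directly in the Schwartz setting does not obviously help: one would first need to know that $\SH_\bullet(N,-)$ for unipotent $N$ is computed by the Chevalley--Eilenberg complex of $\mathfrak{n}$, and that is again a form of comparison.

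The paper sidesteps this entirely by never leaving $G_n$. Rather than reducing to the Levi and appealing to finiteness of the Jacquet-type homology, one exhibits directly an element $z\in Z(\mathfrak{g}_n)$, depending polynomially on $s$, that annihilates the whole induced representation $(I_{\underline\chi}(s)\otimes F)\times\pi_0$ \emph{regardless of $\pi_0$}. The mechanism is that the image of such a $z$ under the Harish-Chandra map $Z(\mathfrak{g}_n)\to Z(\mathfrak{g}_k)\otimes Z(\mathfrak{g}_{n-k})$ already contains a tensor factor killing $I_{\underline\chi}(s)\otimes F$: concretely one can take $z=c(\mu(s))$, where $c(t)\in Z(\mathfrak{g}_n)[t]$ is the ``characteristic polynomial'' element with Harish-Chandra image $\prod_{i=1}^n(t-x_i)$, evaluated at any coordinate $\mu(s)$ of the infinitesimal character of $I_{\underline\chi}(s)$, and then pass to a finite product to absorb the weights of $F$. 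Since $\pi$ has finite length, this $z$ acts invertibly on $\pi^\vee$ once $s$ avoids a proper Zariski-closed set, and the standard centrality argument on $\SH_i(G_n,\pi^\vee\otimes V)$ forces vanishing. All finiteness is on the $\pi$ side, where it is given; nothing is asked of $\pi_0$ or of $\SH_q(N,-)$.
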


\vskip 5pt

\begin{proof}
This lemma can be proved in the same way as \cite[Lem. 4.2]{xue2020besselShomology}. The idea is to construct an element $z$ in the center of the universal enveloping algebra of $\gothg_n = {\rm Lie}\left(G_n\right)_\C$ explicitly, such that $z$ annihilates $\left(I_{\underline{\chi}}\left(s_1,s_2,\cdots,s_{k}\right)\otimes F\right)\times\pi_0$ but does not annihilate $\pi$.

\end{proof}

\vskip 5pt

Next, we prove a vanishing lemma that concerns Hom spaces.

\begin{Lem}\label{L:vanishingLem1}
Let
\[
    \pi = LQ\left(\tau_1|{\det}_{d_1}|^{s_1}\times\tau_2|{\det}_{d_2}|^{s_2}\times\cdots\times\tau_r|{\det}_{d_r}|^{s_r}\right)
\]
be an irreducible representation of $G_n$. Let $s$ be a real number, $\tau$ a unitary discrete series of $G_d$, and $\pi_0$ a representation of $G_{n-d}$ (not necessarily of finite length). Suppose there either 

\begin{itemize}
    \item $s > s_1$; or

    \vskip 5pt

    \item $s+\frac{1}{2}>s_1\geq\cdots \geq s_k \geq s > s_{k+1}$, and $\tau|{\det}_d|^s\not\simeq \tau_i|{\det}_{d_i}|^{s_i}$ for $i=1,\cdots,k$.
\end{itemize}
\vskip 5pt
Then we have
\[
    \Hom_{G_n}\left(\tau|\det|^s\times\pi_0, \pi\right) = 0.
\]
\end{Lem}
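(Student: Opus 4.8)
The plan is to bound the Hom space by a Schwartz homology group and then invoke the infinitesimal character argument of Lemma~\ref{L:inftesimalArguement}. First I would rewrite
\[
    \Hom_{G_n}\left(\tau|\det|^s\times\pi_0, \pi\right) \simeq \Hom_{G_n}\left(\pi^\vee, \left(\tau|\det|^s\times\pi_0\right)^\vee\right),
\]
and using that dualizing commutes with parabolic induction, identify the right-hand side with $\Hom_{G_n}\left(\pi^\vee, \tau^\vee|\det|^{-s}\times\pi_0^\vee\right)$. By Frobenius reciprocity (Proposition~\ref{P:Frobenious}, applied with $H$ the relevant parabolic), this Hom space is a quotient of, or more precisely controlled by, a space of the form $\left(\pi^\vee\otimes\left(\tau^\vee|\det|^{-s}\times\pi_0^\vee\right)\right)_{G_n}$, i.e.\ the zeroth Schwartz homology $\SH_0\left(G_n,\pi^\vee\otimes\tau^\vee|\det|^{-s}\times\pi_0^\vee\right)$. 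So it suffices to show this $\SH_0$ vanishes.

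Next I would compare the ``bad'' exponent $s$ against the range $[s_r,s_1]$ of exponents appearing in the Langlands data of $\pi$. The key principle is that an irreducible constituent of $\tau^\vee|\det|^{-s}\times\pi_0^\vee$ has an infinitesimal character whose ``leading'' exponent is governed by $-s$ (together with the Harish-Chandra parameter of $\tau$), whereas $\pi^\vee$ has infinitesimal character governed by $-s_1\ge\cdots\ge -s_r$. In the first case, $s>s_1$, the translate $\tau|\det|^s$ sits strictly outside the convex hull of the segments defining $\pi$, so no central character of $\gothg_n$ can match; one then produces, exactly as in Lemma~\ref{L:inftesimalArguement}, an element $z$ in the center of $U(\gothg_n)$ acting by zero on $\tau|\det|^s\times\pi_0$ but by a nonzero scalar on $\pi$, forcing all $\SH_i$ (in particular $\SH_0$) to vanish. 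In the second case, $s+\tfrac12>s_1\ge\cdots\ge s_k\ge s>s_{k+1}$ with $\tau|\det|^s\not\simeq\tau_i|\det|^{s_i}$ for $i\le k$: here $s$ lies inside the range, but the strict inequality $s+\tfrac12>s_1$ together with the non-isomorphism hypothesis means the segment attached to $\tau|\det|^s$ is never ``linked'' to any of the first $k$ segments, and being larger than all the remaining ones $s_{k+1},\dots,s_r$ it is unlinked from those as well; consequently the cuspidal support of $\tau|\det|^s$ is disjoint from that of $\pi$, and again the infinitesimal characters are incompatible, so the same central-element argument applies.

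The main obstacle is the second bullet: unlike the first case, one cannot separate exponents purely by size, and the hypothesis $s+\tfrac12>s_1$ is exactly what is needed to rule out the only possible linkage (a shift by $\tfrac12$ between discrete-series segments of the relevant type). I would make this precise by writing down the Harish-Chandra parameters of the discrete series $\tau$ and $\tau_i$ explicitly, translating the non-isomorphism and the inequality into a statement that the multisets of parameters of $\tau^\vee|\det|^{-s}$ and of any constituent of $\pi^\vee$ are disjoint, and then the construction of $z$ is routine (a suitable symmetric function in the Harish-Chandra parameter coordinates separates the two infinitesimal characters). A secondary technical point is ensuring that $\pi_0$ need not be of finite length: this is harmless because the infinitesimal-character/central-element argument in Lemma~\ref{L:inftesimalArguement} is insensitive to finiteness of length on the ``$\pi_0$ side'', only the finite-length representation $\pi$ matters for producing $z$.
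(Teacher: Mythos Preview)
Your approach has a genuine gap: the infinitesimal-character argument you outline cannot be made to work under the stated hypotheses.

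The core difficulty is that $\pi_0$ is an \emph{arbitrary} representation of $G_{n-d}$, so $\tau|\det|^s\times\pi_0$ has no fixed infinitesimal character to compare with that of $\pi$. For a central element $z\in Z(\gothg_n)$ to annihilate $\tau|\det|^s\times\pi_0$ for \emph{every} $\pi_0$, its image under the Harish--Chandra map $Z(\gothg_n)\to Z(\gothg_d)\otimes Z(\gothg_{n-d})$ must lie in $\operatorname{Ann}(\tau|\det|^s)\otimes Z(\gothg_{n-d})$. In the symmetric-function model this means $z$, viewed as a symmetric polynomial in $n$ variables, must vanish whenever $d$ of its variables are set equal to the Harish--Chandra parameter of $\tau|\det|^s$. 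Such a $z$ that is nonzero on $\pi$ exists precisely when that parameter is \emph{not} a sub-multiset of the Harish--Chandra parameter of $\pi$. But the hypotheses of the lemma constrain only the \emph{exponents} $s_i$, not the full Harish--Chandra parameters: over $F=\R$, if some $\tau_i$ is a discrete series of $G_2$ with weight $m_i$, the parameter of $\pi$ contains $s_i\pm\tfrac{m_i-1}{2}$, and one can easily have $s=s_i+\tfrac{m_i-1}{2}$ (with $\tau$ a suitable character of $G_1$) while still satisfying $s>s_1$. In that situation no separating $z$ exists. This is exactly why Lemma~\ref{L:inftesimalArguement} is stated only for \emph{generic} tuples $(s_1,\dots,s_k)$; it gives nothing at a specific point. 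Your claim in the second bullet that ``unlinked segments'' forces disjoint cuspidal support, hence distinct infinitesimal characters, is likewise incorrect: unlinked discrete-series data may share infinitesimal-character entries.

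The paper's actual argument is of a completely different nature. It first reduces to the case where $\pi_0$ is irreducible by invoking a right adjoint to parabolic induction on the level of $(\gothg_n,K_n)$-modules (the functor ${\rm cofib}_I\circ\mathcal{C}_I$ of \cite{MR4344027}, which preserves finite length). With $\pi_0$ irreducible, one substitutes its standard module and argues via Langlands classification: if $s\ge b_1$ (the leading exponent of $\pi_0$) the resulting standard module would force $\tau|\det|^s$ into the Langlands data of $\pi$, contradicting the hypotheses; if $s<b_1$, one passes to an irreducible subquotient of $\tau|\det|^s\times\rho_1|\det|^{b_1}$ and uses the explicit reducibility criterion for products of two essentially discrete series (which, when reduction occurs, pushes the leading exponent up by at least $\tfrac12$). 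Iterating finitely many times produces a standard module surjecting onto $\pi$ whose leading exponent is either $\ge s+\tfrac12$ or coincides with $\tau|\det|^s$, either of which contradicts the hypotheses. The $\tfrac12$ appearing in the second bullet is precisely the jump coming from this reducibility criterion, not an infinitesimal-character phenomenon.
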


\vskip 5pt

\begin{proof}
This lemma can be proved in the same manner as \cite[Thm. A.1.1]{chen2023multiplicity} following the idea of \cite{moeglin2012conjecture}. For the sake of completeness, we provide a sketch of the proof below.

\vskip 5pt

We shall prove by contradiction. Suppose on the contrary that there is a real number $s$, a unitary discrete series $\tau$ of $G_d$ and a representation $\pi_0$ of $G_{n-d}$, satisfying the conditions in the lemma, but
\[
\Hom_{G_n}(\tau|\det|^s\times\pi_0,\pi) \neq 0.
\]
Let $\gothg_n$ be the complexified Lie algebra of $G_n$, and $K_n$ a maximal compact subgroup of $G_n$. Since for any representation in $\Smod\left(G_n\right)$, its subspace of $K_n$-finite vectors is dense, we know that the Hom space between the $\left(\gothg_n, K_n\right)$-module of $\tau|\det|^s\times\pi_0$ and the $\left(\gothg_n, K_n\right)$-module of $\pi$ is also non-zero.
Then we can appeal to the second adjointness. In \cite[Sect. 5]{MR4344027}, Y. Din defined various functors. In particular, the composition ${\rm cofib}_I\circ\mathcal{C}_I$ of the functors ${\rm cofib}_I$ and $\mathcal{C}_I$ there, provides a right adjoint functor of the parabolic induction functor ${\rm pind}_I$. This functor ${\rm cofib}_I\circ\mathcal{C}_I$ also preserves admissibility and the property of finite length. 
By using this functor, we can further assume that $\pi_0$ is an irreducible representation.

\vskip 5pt

Next we use the Langlands classification to deduce a contradiction. Suppose that
\[
    \pi_0 = LQ\left(\rho_1|\det|^{b_1}\times\rho_2|\det|^{b_2}\times\cdots\times\rho_m|\det|^{b_m}\right),
\]
where $\rho_i$ are unitary discrete series of $G_{\alpha_i}$, and $b_1\geq\cdots\geq b_m$ are real numbers. Substituting this into the Hom space, we have
\[
    \Hom_{G_n}\left(\tau|\det|^s\times\rho_1|\det|^{b_1}\times\rho_2|\det|^{b_2}\times\cdots\times\rho_m|\det|^{b_m},\pi\right) \neq 0.
\]
If $s\geq b_1$, then $\tau|\det|^s\times\rho_1|\det|^{b_1}\times\cdots\times\rho_m|\det|^{b_m}$ is a standard module of $G_n$. It then follows that
\[
    \pi = LQ\left(\tau|\det|^s\times\rho_1|\det|^{b_1}\times\rho_2|\det|^{b_2}\times\cdots\times\rho_m|\det|^{b_m}\right),
\]
which contradicts our assumption on $\pi$. So we must have $s< b_1$. By induction in stages, one can see that there is an irreducible subquotient $T$ of $\tau|\det|^s\times \rho_1|\det|^{b_1}$, such that
\[
    \Hom_{G_n}\left(T\times\rho_2|\det|^{b_2}\times\cdots\times\rho_m|\det|^{b_m},\pi\right) \neq 0.
\]
Again, we write $T$ as a Langlands quotient
\[
    T=LQ\left(\rho'_0|\det|^{b'_0}\times\cdots\times \rho'_t|\det|^{b'_t}\right).
\]
If $\tau|\det|^s\times \rho_1|\det|^{b_1}$ is irreducible, then $T \simeq \rho_1|\det|^{b_1}\times\tau|\det|^s$. If $\tau|\det|^s\times \rho_1|\det|^{b_1}$ is reducible, then by the irreducibility criterion for principal series \cite[Thm. 6.2]{MR401654} \cite[Thm. 10b]{MR1476497}, we can conclude that $b'_0\geq s+1/2$.
Replacing $T$ in the Hom space by its standard module $\rho'_0|\det|^{b'_0}\times\cdots\times \rho'_t|\det|^{b'_t}$ and repeating this procedure. After finitely many steps, we will find that
\[
\Hom_{G_n}\left(\rho''_0|\det|^{c_0}\times\rho''_1|\det|^{c_1}\times\cdots\times\rho''_u|\det|^{c_u},\pi\right) \neq 0,
\]
where $\rho''_0|\det|^{c_0}\times\rho''_1|\det|^{c_1}\times\cdots\times\rho''_u|\det|^{c_u}$ is a standard module, such that either $c_0\geq s+1/2$, or $\tau|\det|^s\simeq \rho''_i|\det|^{c_i}$ for some $1\leq i \leq u$. This again contradicts our assumption on $\pi$. Therefore, the lemma holds.

\end{proof}

\vskip 5pt

\vskip 10pt

\section{Vogan's classification} 
\label{sec:tadic_vogan_classification}

\subsection{Building blocks} 
\label{sub:building_blocks_of_the_unitary_dual}

According to Vogan's classification of the unitary dual of Archimedean general linear groups \cite{MR827363}, any irreducible unitary representation of $G_n$ is of the form
\[
    \pi \simeq \pi_1 \times \pi_2 \times \cdots \times \pi_r,
\]
where each $\pi_i$ is one of the following building blocks:
\vskip 5pt

\begin{itemize}
    \item a generalized Speh representation of $G_{kd}$, such a representation is of the form 
    \[
        \Speh\left(\delta,d\right) \coloneqq LQ\left(\delta|\det|^{\frac{d-1}{2}}\times \delta|\det|^{\frac{d-3}{2}}\times \cdots \times \delta|\det|^{-\frac{d-1}{2}}\right),
    \]
    where $\delta$ is a unitary discrete series of $G_k$;

    \vskip 5pt

    \item a complementary series representation of $G_{2kd}$, such a representation is of the form
    \[
        C\left(\delta,d,s\right) \coloneqq \Speh\left(\delta,d\right)|\det|^{s}\times \Speh\left(\delta,d\right)|\det|^{-s},
    \]
    where $0<s<1/2$ is a real number.
\end{itemize}
\vskip 5pt
An irreducible unitary representation $\pi$ of $G_n$ is said to be of Arthur type if, in its decomposition, no complementary series shows up. In this case, one can attach an A-parameter 
\[
    \Psi_\pi: L_F\times \SL_2\left(\C\right) \lra G_n\left(\C\right)
\]
to $\pi$: suppose that $\pi_i = \Speh\left(\delta_i,d_i\right)$, then we set
\[
    \Psi_\pi = \scrL\left(\delta_1\right)\boxtimes S_{d_1} + \scrL\left(\delta_2\right)\boxtimes S_{d_2} + \cdots + \scrL\left(\delta_r\right)\boxtimes S_{d_r}.  
\]
Here $\scrL\left(\delta_i\right)$ is the L-parameter attached to $\delta_i$ by the local Langlands correspondence of general linear groups.

\vskip 5pt

To extend the notion of relevance to all irreducible unitary representations, we need an alternative description of the complementary series. Let $\Delta\left(\delta,s\right) \coloneqq \delta|\det|^s\times\delta|\det|^{-s}$. Then 
\begin{equation}\label{E:CompleSeriesStdMod}
    \Delta\left(\delta,s\right)|\det|^{\frac{d-1}{2}} \times \Delta\left(\delta,s\right)|\det|^{\frac{d-3}{2}} \times \cdots \times \Delta\left(\delta,s\right)|\det|^{-\frac{d-1}{2}}
\end{equation}
is a standard module, and we claim that
\[
     C\left(\delta,d,s\right) \simeq LQ\left(\Delta\left(\delta,s\right)|\det|^{\frac{d-1}{2}} \times \Delta\left(\delta,s\right)|\det|^{\frac{d-3}{2}} \times \cdots \times \Delta\left(\delta,s\right)|\det|^{-\frac{d-1}{2}}\right).
\]
To see this, note that by definition $C\left(\delta,d,s\right)$ is a quotient of 
\begin{equation}\label{E:CompleSeriesQuoDef}
    \left(\delta|\det|^{s+\frac{d-1}{2}}\times \cdots \times \delta|\det|^{s-\frac{d-1}{2}}\right)\times \left(\delta|\det|^{-s+\frac{d-1}{2}}\times \cdots \times \delta|\det|^{-s-\frac{d-1}{2}}\right).
\end{equation}
By a result of Casselman--Shahidi (see \cite[Prop. 5.3]{MR1634020}), for every pair of non-negative integers $\left(i,j\right)$, we have
\[
    \delta|\det|^{s+\frac{d-1}{2}-i}\times\delta|\det|^{-s+\frac{d-1}{2}-j} \simeq \delta|\det|^{-s+\frac{d-1}{2}-j} \times \delta|\det|^{s+\frac{d-1}{2}-i}
\]
is irreducible. Therefore, one can exchange the order of the induction in (\ref{E:CompleSeriesQuoDef}) one at a time, and eventually get (\ref{E:CompleSeriesStdMod}) from it. A worth noting corollary is that there is a surjection
\[
    \delta|\det|^{s+\frac{d-1}{2}} \times C\left(\delta,d-1,\frac{1}{2}-s\right) \times \delta|\det|^{-s-\frac{d-1}{2}} \twoheadrightarrow C\left(\delta,d,s\right). 
\]

\vskip 5pt


\subsection{Relevance condition for unitary representations} 
\label{sub:formal_parameters_and_relevance}

Now, for any pair $\left(\delta,s\right)$, where $\delta$ is a discrete series representation of a general linear group, and $s\in\R$ such that $0<s<\frac{1}{2}$, we attach a symbol $\scrL\left(\delta,s\right)$ to it. Similar to the case of Speh representations, for a complementary series $C\left(\delta,d,s\right)$ of $G_{2kd}$, we attach a formal parameter $\scrL\left(\delta,s\right)\boxtimes S_d$ to it. 
We set 
\[
    D\left(\scrL\left(\delta,s\right)\right) \coloneqq \scrL\left(\delta,\frac{1}{2}-s\right).
\]
We denote by $\Para_{disc}$ the set of all L-parameters $\scrL\left(\delta\right)$ of discrete series, and by $\Para_{cmpl}$ the set of all symbols $\scrL\left(\delta,s\right)$. Set $\Para = \Para_{disc} \sqcup \Para_{cmpl}$, and $\Psi_{unit}\left(\GL\right)$ the set of all finite formal sums of the form
\[
    a_1 \scrL_1\boxtimes S_{d_1} + a_2 \scrL_2 \boxtimes S_{d_2} + \cdots + a_r \scrL_r \boxtimes S_{d_r},
\]
where each $a_i\in\Z_{>0}$, $\scrL_i\in \Para$ and $d_i\in \Z_{>0}$. Then $\Psi_{unit}\left(\GL\right)$ is a monoid with obvious addition and zero.
For an irreducible unitary representation
\[
    \pi = \pi_1 \times \pi_2 \times \cdots \times \pi_r
\]
of $G_n$, we attach a formal sum in $\Psi_{unit}\left(\GL\right)$
\[
    \Psi_\pi = \Psi_1 + \Psi_2 + \cdots + \Psi_r
\]
to it, where $\Psi_i = \scrL\left(\delta\right)\boxtimes S_d$ is the A-parameter associated to $\pi_i$ if $\pi_i = \Speh\left(\delta,d\right)$ is a Speh representation, and $\Psi_i = \scrL\left(\delta,s\right)\boxtimes S_{d}$ is the parameter defined as above if $\pi_i = C\left(\delta,d,s\right)$ is a complementary series. It is easy to verify that the parameter $\Psi_\pi$ is well-defined and, conversely, uniquely determines $\pi$. 
If $\pi$ is generic, we say $\Psi_\pi$ is a generic parameter. When $\pi$ is of Arthur type, the formal sum we defined recovers the A-parameter of $\pi$. 

\vskip 5pt

As in \cite[Def. 1]{MR2133760}, we can associate a partition of $n$ to $\pi$, called the \textit{$\SL_2$-type} of $\pi$ and denoted by $TP\left(\pi\right)$, as follows: 
\vskip 5pt
\begin{itemize}
     \item $TP\left(\pi\right)$ is obtained by concatenating $TP\left(\pi_i\right)$ of all $\pi_i$ occurring in its decomposition;

     \vskip 5pt

     \item if $\pi_i=\Speh\left(\delta_i,d_i\right)$, and $\delta_i$ is a unitary discrete series representation of $G_{k_i}$, then the $\SL_2$-type $TP\left(\pi_i\right)$ of $\pi_i$ is 
     \[
        \big(\underbrace{d_i,d_i,\cdots,d_i}_{k_i\textit{-times}}\big);
     \]

     \vskip 5pt

     \item if $\pi_i=C\left(\delta_i,d_i,s_i\right)$, and $\delta_i$ is a unitary discrete series representation of $G_{k_i}$, then the $\SL_2$-type $TP\left(\pi_i\right)$ of $\pi_i$ is 
     \[
        \big(\underbrace{d_i,d_i,\cdots,d_i}_{2k_i\textit{-times}}\big).
     \]

     \vskip 5pt
 \end{itemize} 
In particular, an irreducible unitary representation $\pi$ is generic if and only if its $\SL_2$-type is of the form
\[
    \left(1,1,\cdots,1\right).
\]

\begin{Def}\label{D:unitaryRelevant}
Let $\pi$ be an irreducible unitary representation of $G_n$, and $\sigma$ an irreducible unitary representation of $G_m$. Suppose that
\begin{equation}\label{E:DecompPara}
    \Psi_\pi = \scrL_1\boxtimes S_{d_1} + \scrL_2\boxtimes S_{d_2} + \cdots + \scrL_r\boxtimes S_{d_r},
\end{equation}  
where each $\scrL_i\in\Para$, namely, $\scrL_i$ is either the L-parameter $\scrL\left(\delta_i\right)$ of a discrete series $\delta_i$, or the symbol $\scrL\left(\delta_i,s_i\right)$ corresponding to a complementary series $C\left(\delta_i,s_i,d_i\right)$. 
\begin{enumerate}
    \item We set
    \[
        NT_{\pi} \coloneqq \sum_{i=1}^{r} \left(d_i-1\right),
    \]
which measures the ``non-temperedness'' of $\pi$.
    \vskip 5pt

    \item We say that $\pi$ and $\sigma$ are relevant, if there is a partition $\left\{1,2,\cdots, r\right\} = I \sqcup J \sqcup K$, such that for any $k\in K$, $\scrL_k\in\Para_{cmpl}$, and
\begin{equation}\label{E:RelevantDecompPara}
    \Psi_\sigma = \sum_{i\in I}\scrL_i\boxtimes S_{d_i+1} +\sum_{j\in J}\scrL_j\boxtimes S_{d_j-1} + \sum_{k\in K} D\left(\scrL_k\right) \boxtimes S_{d_k} + \Psi_0.
\end{equation}
Here $\Psi_0$ is a generic parameter, and summands of the form $\scrL\boxtimes S_0$ are understood as zero. 
\end{enumerate}

\end{Def}

\vskip 5pt

 When $\pi$ and $\sigma$ are both of Arthur type, then our definition of relevance recovers Gan--Gross--Prasad's definition in \cite[Sect. 3]{MR4190046}. 

\vskip 5pt

\begin{Rmk}
The following properties of this notion of relevance are worth noting.
\begin{enumerate}
    \item This notion of relevance is symmetric.

    \vskip 5pt

    \item If both $\pi$ and $\sigma$ are generic, then they are relevant.
\end{enumerate}   
\end{Rmk}

\vskip 5pt

We end up this section by giving a combinatorial description of the notion of relevance. Suppose we are given two irreducible unitary representations $\pi$ and $\sigma$ of general linear groups. We write the parameter of $\pi$ as in (\ref{E:DecompPara}). Let $\eta\in\Para$, namely, $\eta$ is either:

\begin{itemize}
    \item the L-parameter $\scrL\left(\delta\right)$ of a discrete series $\delta$ of $G_k$; or

    \vskip 5pt

    \item the symbol $\scrL\left(\delta,s\right)$ attached to a discrete series $\delta$ and a real number $0<s<1/2$.
\end{itemize}
\vskip 5pt
For any positive integer $a\in\Z_{>0}$ and a subset $S\subset\left\{1,2,\cdots,r\right\}$, we set
\[
    m_S\left(\eta,a;\pi\right) = \sharp\left\{i\in S~\big|~\textit{$\scrL_i = \eta$ and $d_i=a$} \right\}.
\]
When $S=\left\{1,2,\cdots,r\right\}$, we suppress it from the subscript and simply denote this multiplicity by $m\left(\eta,a;\pi\right)$. Likewise, we define $m_S\left(\eta,a;\sigma\right)$ and $m\left(\eta,a;\sigma\right)$. We define two differences $\Lambda\left(\eta,a;\pi,\sigma\right)$ and $\Lambda\left(\eta,a;\sigma,\pi\right)$ as follows.

\vskip 5pt

$\bullet$ If $\eta\in\Para_{disc}$, i.e. $\eta = \scrL\left(\delta\right)$ is the L-parameter of a discrete series, put 
\[
    \Lambda\left(\eta,a;\pi,\sigma\right) = m\left(\eta,a;\pi\right) - m\left(\eta,a+1;\sigma\right) + m\left(\eta,a+2;\pi\right) - m\left(\eta,a+3;\sigma\right) + \cdots.
\]
We define $\Lambda\left(\eta,a;\sigma,\pi\right)$ similarly.

\vskip 5pt

$\bullet$ If $\eta\in\Para_{cmpl}$, i.e. $\eta = \scrL\left(\delta,s\right)$ is the symbol attached to a complementary series, put
\begin{align*}
    \Lambda\left(\eta,a;\pi,\sigma\right) = \Big(m\left(\eta,a;\pi\right) &+ m\left(D\left(\eta\right),a+1;\pi\right) + m\left(\eta,a+2;\pi\right) + \cdots \Big) \\
    &- \Big(m\left(\eta,a+1;\sigma\right) + m\left(D\left(\eta\right),a+2;\sigma\right) + m\left(\eta,a+3;\sigma\right) + \cdots \Big).
\end{align*}
We define $\Lambda\left(\eta,a;\sigma,\pi\right)$ similarly.

\vskip 5pt

\begin{Lem}\label{L:RelevantCombinatorial}
The representations $\pi$ and $\sigma$ are relevant if and only if these alternating sums $\Lambda\left(\eta,a;\pi,\sigma\right)$ and $\Lambda\left(\eta,a;\sigma,\pi\right)$ are non-negative for any $\eta\in\Para$ and $a\in\Z_{>0}$.
\end{Lem}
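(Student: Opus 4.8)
The plan is to translate the existence of a relevance partition in Definition \ref{D:unitaryRelevant} into a bookkeeping problem about multiplicities, and then show that the ``obvious'' necessary inequalities on the alternating sums $\Lambda$ are also sufficient. The proof naturally splits along the decomposition $\Para = \Para_{disc} \sqcup \Para_{cmpl}$, because the two types of parameters $\eta$ never interact: in \eqref{E:RelevantDecompPara} an index $i$ with $\scrL_i \in \Para_{disc}$ can only land in $I$ or $J$ (contributing $\scrL_i \boxtimes S_{d_i \pm 1}$), while an index with $\scrL_i \in \Para_{cmpl}$ may additionally land in $K$ (contributing $D(\scrL_i) \boxtimes S_{d_i}$); and crucially, the generic remainder $\Psi_0$ only ever involves $S_1$-summands, so it can absorb any parameter $\eta \boxtimes S_1$ but nothing of larger $\SL_2$-length. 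Thus, for a \emph{fixed} $\eta$, constructing the partition is independent across different $\eta$, and one reduces to a one-parameter problem: given the multiplicity sequences $\big(m(\eta,a;\pi)\big)_{a\geq 1}$ and $\big(m(\eta,a;\sigma)\big)_{a\geq 1}$ (and, when $\eta\in\Para_{cmpl}$, also those of $D(\eta)$), decide when the $\pi$-side boxes of type $\eta$ can be matched to $\sigma$-side boxes as prescribed.

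\textbf{Key steps.} First I would fix $\eta$ and set up the matching. For $\eta \in \Para_{disc}$: each $\pi$-summand $\eta\boxtimes S_{d}$ must become either $\eta\boxtimes S_{d+1}$ or $\eta\boxtimes S_{d-1}$ on the $\sigma$-side (the latter disappearing into $\Psi_0$ when $d=1$), and every $\sigma$-summand $\eta\boxtimes S_{e}$ with $e \geq 2$ must be hit exactly once this way, while $\sigma$-summands $\eta\boxtimes S_1$ are free (they go into $\Psi_0$). I would phrase this as a bipartite matching / flow feasibility condition and observe that it is feasible iff, running from the top size downward, the running surplus of $\pi$-boxes over $\sigma$-boxes is always non-negative; unwinding the definitions, that running surplus at threshold $a$ is precisely the alternating sum $m(\eta,a;\pi) - m(\eta,a+1;\sigma) + m(\eta,a+2;\pi) - \cdots = \Lambda(\eta,a;\pi,\sigma)$, and symmetrically $\Lambda(\eta,a;\sigma,\pi)$ from the other side (the two are needed because a $\sigma$-box of size $e$ can only be fed by a $\pi$-box of size $e-1$ or $e+1$, so both directions of the staircase inequality must hold). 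Second, for $\eta\in\Para_{cmpl}$, the same analysis applies but with the extra option $K$: a $\pi$-box $\eta\boxtimes S_d$ may instead become $D(\eta)\boxtimes S_d$ on the $\sigma$-side. This is exactly why the definition of $\Lambda(\eta,a;\pi,\sigma)$ in the complementary case bundles together $m(\eta,\bullet;\pi)$ and $m(D(\eta),\bullet;\pi)$ in the positive part, and correspondingly on the $\sigma$-side: the $K$-option lets one trade an $\eta$-box of size $d$ for a $D(\eta)$-box of the same size, so for counting purposes the relevant conserved quantity is the \emph{combined} staircase sum, and feasibility is again equivalent to non-negativity of these combined alternating sums. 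One has to check that the two families $\{\eta, D(\eta)\}$ don't double-count — but $D$ is an involution with no fixed points on $\Para_{cmpl}$ (since $s \neq 1/2 - s$ for $0 < s < 1/2$), so the pairs $\{\eta, D(\eta)\}$ genuinely partition $\Para_{cmpl}$ and the accounting is consistent. Finally I would assemble: relevance holds iff for every $\eta$ the corresponding local matching is feasible iff all $\Lambda(\eta,a;\pi,\sigma), \Lambda(\eta,a;\sigma,\pi) \geq 0$, which is the claim.

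\textbf{Main obstacle.} The genuinely non-trivial point is the equivalence ``matching feasible $\iff$ all partial alternating sums non-negative,'' i.e. proving that the local inequalities actually \emph{suffice} to build the partition $I\sqcup J\sqcup K$ rather than merely being necessary. Necessity is a one-line alternating-sum computation; sufficiency requires an explicit greedy construction (match boxes from largest size down, always preferring the $J$-move $d \mapsto d-1$ over the $I$-move $d\mapsto d+1$, and in the complementary case using the $K$-move only when forced) together with an inductive argument that the inequalities are preserved after each step and that at size $1$ everything left over is genuinely generic and absorbable into $\Psi_0$. Care is needed that the $K$-move, which changes $\eta$ to $D(\eta)$, does not break the inequalities for the parameter $D(\eta)$ — but because $\Lambda$ in the complementary case is already the symmetrized/combined sum over the pair $\{\eta,D(\eta)\}$, the $K$-move is invisible to it, which is exactly the design of the definition and makes the bookkeeping close up. I expect this greedy-plus-induction verification to be the bulk of the work, with everything else being unwinding of definitions.
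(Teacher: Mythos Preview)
Your proposal is correct and follows essentially the same approach as the paper: necessity is a direct alternating-sum computation (the paper shows explicitly that $\Lambda(\eta,a;\pi,\sigma)$ equals $m_J(\eta,a;\pi)$ in the discrete case and $m_J(\eta,a;\pi)+m_K(\eta,a;\pi)+m_J(D(\eta),a+1;\pi)$ in the complementary case), and sufficiency is a greedy top-down induction, peeling off the largest summand and verifying the inequalities persist for the smaller pair $(\pi_0,\sigma_0)$. The only mild difference is organizational: rather than ``always prefer the $J$-move over the $I$-move'', the paper looks at the globally largest $\SL_2$-length across \emph{both} $\Psi_\pi$ and $\Psi_\sigma$; if a $\sigma$-summand $\eta'\boxtimes S_{d'}$ with $d'>d_1$ sits on top, the inequality $\Lambda(\eta',d'-1;\pi,\sigma)\geq 0$ forces an $I$-match, while if the top summand $\scrL_1\boxtimes S_{d_1}$ is on the $\pi$-side, the inequality $\Lambda(\scrL_1,d_1-1;\sigma,\pi)\geq 0$ forces a $K$- or $J$-match --- this ordering makes the preservation of the $\Lambda$-inequalities at each step completely explicit (with the only delicate cases being $\eta=\scrL_1$, $a\in\{d_1,d_1\pm 1\}$, handled by direct inspection).
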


\vskip 5pt

\begin{proof}
Firstly, we prove that if $\pi$ and $\sigma$ are relevant, then these alternating sums $\Lambda\left(\eta,a;\pi,\sigma\right)$ and $\Lambda\left(\eta,a;\sigma,\pi\right)$ are non-negative. Suppose that the parameter of $\pi$ has a decomposition as in (\ref{E:DecompPara}), and $\left\{1,2,\cdots, r\right\} = I \sqcup J\sqcup K$ is a partition such that (\ref{E:RelevantDecompPara}) holds. 

\vskip 5pt

$\bullet$ If $\eta\in\Para_{disc}$, i.e. $\eta = \scrL\left(\delta\right)$ is the L-parameter of a discrete series, for any non-negative integer $k$ by the decomposition (\ref{E:DecompPara}) of $\Psi_\pi$ we have
\[
    m\left(\eta,a+2k;\pi\right) = m_I\left(\eta,a+2k;\pi\right) + m_J\left(\eta,a+2k;\pi\right).
\]
Similarly, by the decomposition (\ref{E:RelevantDecompPara}) of $\Psi_\sigma$, we have
\[
    m\left(\eta,a+2k+1;\sigma\right) = m_I\left(\eta,a+2k;\pi\right) + m_J\left(\eta,a+2k+2;\pi\right).
\]
Therefore
\begin{align*}
    \Lambda\left(\eta,a;\pi,\sigma\right) &= m\left(\eta,a;\pi\right) - m\left(\eta,a+1;\sigma\right) + m\left(\eta,a+2;\pi\right) - m\left(\eta,a+3;\sigma\right) + \cdots \\
    & = m_J\left(\eta,a;\pi\right)
\end{align*}
is non-negative. One can show that $\Lambda\left(\eta,a;\sigma,\pi\right)$ is also non-negative in the totally same way.

\vskip 5pt

$\bullet$ If $\eta\in\Para_{cmpl}$, i.e. $\eta = \scrL\left(\delta,s\right)$ is the symbol attached to a complementary series, for any non-negative integer $k$ by the decomposition (\ref{E:DecompPara}) of $\Psi_\pi$ we have
\[
    m\left(D^k\left(\eta\right),a+k;\pi\right) = m_I\left(D^k\left(\eta\right),a+k;\pi\right) + m_J\left(D^k\left(\eta\right),a+k;\pi\right) + m_K\left(D^k\left(\eta\right),a+k;\pi\right).
\]
Similarly, by the decomposition (\ref{E:RelevantDecompPara}) of $\Psi_\sigma$, we have
\begin{align*}
    m\left(D^k\left(\eta\right),a+k+1;\sigma\right) = m_I&\left(D^k\left(\eta\right),a+k;\pi\right) + m_J\left(D^{k+2}\left(\eta\right),a+k+2;\pi\right)\\
    &+ m_K\left(D^{k+1}\left(\eta\right),a+k+1;\pi\right).
\end{align*}
Here we have made use of the fact that the operator $D$ is involutive. Therefore
\begin{align*}
    \Lambda\left(\eta,a;\pi,\sigma\right) &= \Big(m\left(\eta,a;\pi\right) + m\left(D\left(\eta\right),a+1;\pi\right) + m\left(\eta,a+2;\pi\right) + \cdots \Big) \\
     & \qquad\qquad- \Big(m\left(\eta,a+1;\sigma\right) + m\left(D\left(\eta\right),a+2;\sigma\right) + m\left(\eta,a+3;\sigma\right) + \cdots \Big)\\
     &= m_J\left(\eta,a;\pi\right)+m_K\left(\eta,a;\pi\right) + m_J\left(D\left(\eta\right),a+1;\pi\right)
\end{align*}
is non-negative. One can show that $\Lambda\left(\eta,a;\sigma,\pi\right)$ is also non-negative in the totally same way.

\vskip 5pt

Next we prove that if these alternating sums $\Lambda\left(\eta,a;\pi,\sigma\right)$ and $\Lambda\left(\eta,a;\sigma,\pi\right)$ are non-negative, then $\pi$ and $\sigma$ are relevant. Suppose that the parameter of $\pi$ has a decomposition as in (\ref{E:DecompPara}), we need to define a partition $\left\{1,2,\cdots, r\right\} = I \sqcup J\sqcup K$ such that (\ref{E:RelevantDecompPara}) holds. We shall do it inductively. Without loss of generality, we may assume that
\[
    d_1\geq d_2 \geq \cdots \geq d_r.
\]
There are two cases.

\vskip 5pt

$\bullet$ \textit{Case 1}: There exists some $\eta'\in\Para$ and $d'>d_1$, such that $\eta'\boxtimes S_{d'}\subset \Psi_\sigma$. Consider the alternating sum
\[
    \Lambda\left(\eta',d'-1;\pi,\sigma\right) = m\left(\eta',d'-1;\pi\right) - m\left(\eta',d';\sigma\right) + \cdots
\]
Since, by our assumption, $\Lambda\left(\eta,d'-1;\pi,\sigma\right)$ is non-negative, one can see immediately that there is some $i\in\left\{1,2,\cdots,r\right\}$, such that $\eta' = \scrL_i$ and $d' =d_i+1$. Without loss of generality, we may assume that $i=1$. Let $\pi_0$ be the irreducible unitary representation corresponding to the parameter
\[
    \Psi_\pi-\scrL_1\boxtimes S_{d_1} = \scrL_2\boxtimes S_{d_2} + \cdots + \scrL_r\boxtimes S_{d_r},
\]
and $\sigma_0$ the irreducible unitary representation corresponding to the parameter
\[
    \Psi_\sigma-\scrL_1\boxtimes S_{d_1+1}.
\]
From these definitions, one can check that alternating sums $\Lambda\left(\eta,a;\pi_0,\sigma_0\right)$ and $\Lambda\left(\eta,a;\sigma_0,\pi_0\right)$ are all non-negative. Indeed, on the one hand, we always have
\[
    \Lambda\left(\eta,a;\pi_0,\sigma_0\right) = \Lambda\left(\eta,a;\pi,\sigma\right).
\]
On the other hand, 
\[
    \Lambda\left(\eta,a;\sigma_0,\pi_0\right) = \Lambda\left(\eta,a;\sigma,\pi\right)
\]
unless: 
\begin{itemize}
    \item[-] $\eta = \scrL_1$ and $a=d_1+1$, in which case one does know that
\[
    \Lambda\left(\scrL_1,d_1+1;\sigma_0,\pi_0\right) = m\left(\scrL_1,d_1+1;\sigma_0\right)
\]
is non-negative;

\vskip 5pt

\item[-] $\scrL_1\in\Para_{cmpl}$, $\eta = D\left(\scrL_1\right)$ and $a=d_1$, in which case one does know that
\[
    \Lambda\left(\scrL_1,d_1+1;\sigma_0,\pi_0\right) = m\left(D\left(\scrL_1\right),d_1;\sigma_0\right) + m\left(\scrL_1,d_1+1;\sigma_0\right)
\]
is non-negative.
\end{itemize}
\vskip 5pt
Therefore, by the induction hypothesis $\pi_0$ and $\sigma_0$ are relevant, i.e. there is a partition $\left\{2,\cdots, r\right\} = I' \sqcup J\sqcup K$, such that
\[
    \Psi_{\sigma_0} = \sum_{i\in I'}\scrL_i\boxtimes S_{d_i+1} +\sum_{j\in J}\scrL_j\boxtimes S_{d_j-1} + \sum_{k\in K} D\left(\scrL_k\right) \boxtimes S_{d_k} + \Psi_0
\]
for a generic parameter $\Psi_0$. Put $I = \left\{1\right\} \cup I'$. Then $\left\{1,2,\cdots, r\right\} = I \sqcup J\sqcup K$ is a desired partition.

\vskip 5pt

$\bullet$ \textit{Case 2}: For any $\eta'\in\Para$ and $d'>d_1$, we have $\eta'\boxtimes S_{d'}\not\subset \Psi_\sigma$. If $d_1=1$, then both $\pi$ and $\sigma$ are generic, hence they are relevant. So we may assume that $d_1>1$. Consider the alternating sum
\[
    \Lambda\left(\scrL_1,d_1-1;\sigma,\pi\right) = m\left(\scrL_1,d_1-1;\sigma\right) - m\left(\scrL_1,d_1;\pi\right) + \cdots
\]
Since by our assumption $\Lambda\left(\scrL_1,d_1-1;\sigma,\pi\right)$ is non-negative, one can see immediately that either

\begin{itemize}
    \item[-] $\scrL_1\in\Para_{cmpl}$, and $D\left(\scrL_1\right)\boxtimes S_{d_1} \subset \Psi_\sigma$; or

    \vskip 5pt

    \item[-] $\scrL_1\boxtimes S_{d_1-1}\subset \Psi_\sigma$.
\end{itemize}
\vskip 5pt
Similar to the argument of Case 1, let $\pi_0$ be the irreducible unitary representation corresponding to the parameter 
\[
    \Psi_\pi-\scrL_1\boxtimes S_{d_1} = \scrL_2\boxtimes S_{d_2} + \cdots + \scrL_r\boxtimes S_{d_r},
\]
and $\sigma_0$ the irreducible unitary representation corresponding to the parameter
\[
    \begin{cases}
        \Psi_\sigma-D\left(\scrL_1\right)\boxtimes S_{d_1}, \quad & \textit{if }\scrL_1\in\Para_{cmpl}, \textit{ and } D\left(\scrL_1\right)\boxtimes S_{d_1}\subset \Psi_\sigma;
        \vspace{0.5em}\\

        \Psi_\sigma-\scrL_1\boxtimes S_{d_1-1}, \quad & \textit{otherwise}.
    \end{cases}
\]
One can check that alternating sums $\Lambda\left(\eta,a;\pi_0,\sigma_0\right)$ and $\Lambda\left(\eta,a;\sigma_0,\pi_0\right)$ are all non-negative. Hence, by the induction hypothesis, we know that $\pi_0$ and $\sigma_0$ are relevant, from which we conclude that $\pi$ and $\sigma$ are also relevant.

\end{proof}


\vskip 10pt

\section{Orbit analysis and higher corank models} 
\label{sec:orbit_analysis_and_higher_corank_models}

\subsection{Orbit analysis on Grassmannian} 
\label{sub:orbit_analysis_on_grassmannian_rank_1_case}

We first investigate how a parabolic induced representation $\rho\times\tau$ of $G_{n+1}$ decomposes when restricted to the mirabolic subgroup $M_{n+1}$ using Borel's lemma (Proposition \ref{P:BorelLemma}), where $\rho$ is an essential discrete series representation of $G_k$. Let $P_{k,n+1-k}$ be the standard parabolic subgroup of $G_{n+1}$ with Levi component $G_k\times G_{n+1-k}$. Then we need to do the orbit analysis on the Grassmannian
\[
    P_{k,n+1-k}\backslash G_{n+1} = \Gr\left(k,n+1\right).
\]
Let $\calE_{\rho,\tau}$ be the tempered vector bundle over $\calX = \Gr\left(k,n+1\right)$ corresponding to $\rho\times\tau$. There are exactly two orbits of $M_{n+1}$ on $\calX$.

\vskip 5pt

$\bullet$ The closed orbit $\calZ = \Gr\left(k,n\right)$ consisting of $k$-dimensional subspaces of $F^n$. The action of $M_{n+1}$ on this orbit descends to $G_n$; the stabilizer in $G_n$ of a representative in this orbit is $P_{k,n-k}$. By some elementary computations we know that the conormal bundle of $\calZ$ in $\calX$ is
    \[
        N_{\calZ|\calX}^\vee \simeq G_n\times_{P_{k,n-k}}\std_k,
    \]
    where as a vector space $\std_k = F^k$, the action of $P_{k,n-k}$ on $F^k$ factors through the Levi component $G_k\times G_{n-k}$, $G_k$ acts on $F^k$ by the standard representation, and $G_{n-k}$ acts trivially. Applying Borel's lemma, the contribution of the closed orbit (and derivatives) $\Gamma_{\calZ}\left(\calX, \calE_{\rho,\tau}\right)$ has a filtration, with each graded piece
    \[
        \left(\Sym^j\std_{k,\C}\otimes\rho\right)|{\det}_k|^{\frac{1}{2}}\times\left(\tau~\big|_{G_{n-k}}\right)
    \]
    as representations of $G_n$, where $j=0,1,2,\cdots$.

    \vskip 5pt

$\bullet$ The open orbit $\calX\backslash\calZ$ consisting of $k$-dimensional subspaces not contained in $F^n$. Let $S_k$ be the stabilizer in $M_{n+1}$ of a representative in this orbit. It fits into an exact sequence
    \[
        1 \lra N_{k-1,n+1-k} \lra S_k \lra M_k\times G_{n+1-k} \lra 1,
    \]
    where $N_{k-1,n+1-k}$ is the unipotent radical of $P_{k-1,n+1-k}$. The contribution of the open orbit (as a representation of $M_{n+1}$) is
    \[
        \leftindex^u\ind_{S_k}^{M_{n+1}}\left(\rho|{\det}_k|^{\frac{n+1-k}{2}}~\big|_{M_k}\boxtimes\tau|{\det}_{n+1-k}|^{-\frac{k}{2}}\right),
    \]
    where $\rho|{\det}_k|^{\frac{n+1-k}{2}}~\big|_{M_k}\boxtimes\tau|{\det}_{n+1-k}|^{-\frac{k}{2}}$ is a representation of $M_k\times G_{n+1-k}$, and we regard it as a representation of $S_k$ by inflation.  

\vskip 5pt

We give two useful applications of discussions in this section. The first application concerns the so-called \textit{Fourier--Jacobi models}, namely, the Hom space
\[
    \Hom_{G_n}\left(\pi\otimes \omega_n,\sigma\right)
\]
for two representations $\pi$ and $\sigma$ of $G_n$.

\begin{Lem}\label{L:FJToBessel}
Let $\pi$ and $\sigma$ be finite length representations of $G_n$. Then for all but at most countably many $s\in \sqrt{-1}\cdot\R$, there is a natural isomorphism 
\[
    \Hom_{G_n}\left(|\cdot|^s\times\pi,\sigma\right) \simeq \Hom_{G_n}\left(\pi\otimes \omega_n,\sigma\right).
\]   
\end{Lem}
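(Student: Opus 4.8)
The plan is to realize $|\cdot|^{s}\times\pi$ as a representation of $G_{n+1}$, restrict it to the mirabolic $M_{n+1}\supset G_n$, and run the orbit analysis of \S\ref{sub:orbit_analysis_on_grassmannian_rank_1_case} with $k=1$, $\rho=|\cdot|^{s}$ and $\tau=\pi$ on $\calX=\Gr\left(1,n+1\right)$, closed orbit $\calZ=\Gr\left(1,n\right)$ and open orbit $\calU=\calX\setminus\calZ$. Restricting the resulting sequence further to $G_n$ yields a short exact sequence of $G_n$-representations
\[
    0\longrightarrow \Gamma\left(\calU,\calE_{|\cdot|^{s},\pi}\right)\longrightarrow \left(|\cdot|^{s}\times\pi\right)\big|_{G_n}\longrightarrow \Gamma_{\calZ}\left(\calX,\calE_{|\cdot|^{s},\pi}\right)\longrightarrow 0 .
\]
First I would identify the open-orbit term: since $M_1$ is trivial and the open-orbit stabilizer $S_1$ is precisely the Gan--Gross--Prasad copy of $G_n$, the open-orbit contribution is $\leftindex^u\ind_{G_n}^{M_{n+1}}\left(\pi|{\det}|^{-\frac{1}{2}}\right)$, and writing $M_{n+1}=G_n\ltimes F^{n}$ and computing with coset representatives I would check that its restriction to $G_n$ is canonically $\calS\left(F^{n}\right)\otimes\pi=\pi\otimes\omega_n$. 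The point is that this identification is \emph{independent of $s$}: the parameter $s$ enters only through the trivial group $M_1$ and hence disappears. So the displayed sequence realizes $\pi\otimes\omega_n$ as a natural subrepresentation of $\left(|\cdot|^{s}\times\pi\right)|_{G_n}$, the natural map of the lemma being restriction along this inclusion; it then remains to show that the quotient $\Gamma_{\calZ}$ is invisible to $\SH_{\bullet}\left(G_n,-\otimes\sigma^{\vee}\right)$ for all but countably many $s\in\sqrt{-1}\cdot\R$.

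For the quotient, Borel's lemma (Proposition \ref{P:BorelLemma}) equips $\Gamma_{\calZ}\left(\calX,\calE_{|\cdot|^{s},\pi}\right)$ with a complete descending filtration whose $j$-th graded piece, as a $G_n$-representation, is
\[
    \nu_{j,s}\times\left(\pi\big|_{G_{n-1}}\right),\qquad \nu_{j,s}=\left(\Sym^{j}\std_{1,\C}\otimes|\cdot|^{s}\right)|{\det}_{1}|^{\frac{1}{2}},\qquad j=0,1,2,\dots,
\]
and the crucial observation is that $\nu_{j,s}$ is a character of $G_1$ of the form $\chi_{j}\,|\cdot|^{s+c_{j}}$ with $\chi_{j}$ a fixed unitary character and $c_{j}\ge\frac{1}{2}$ a fixed real number (depending on $j$ and on whether $F=\R$ or $\C$). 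I would then apply Lemma \ref{L:inftesimalArguement} with the finite-length representation there taken to be $\sigma$, with $k=1$, character $\chi_{1}=\chi_{j}$, trivial finite-dimensional twist, and $\pi_{0}=\pi|_{G_{n-1}}$: this produces a proper Zariski-closed subset of the spectral-parameter plane $\C$, and since $s\in\sqrt{-1}\cdot\R$ forces $s+c_{j}$ to trace the vertical line $c_{j}+\sqrt{-1}\cdot\R$, that subset meets the line in a \emph{finite} set $E_{j}\subset\sqrt{-1}\cdot\R$. Hence for $s\in\sqrt{-1}\cdot\R\setminus E_{j}$ one gets $\SH_{i}\bigl(G_n,\sigma^{\vee}\otimes(\nu_{j,s}\times\pi|_{G_{n-1}})\bigr)=0$ for all $i\ge 0$.

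Now set $E=\bigcup_{j\ge 0}E_{j}$, a countable subset of $\sqrt{-1}\cdot\R$, and fix $s\notin E$. Then every graded piece of $\Gamma_{\calZ}\left(\calX,\calE_{|\cdot|^{s},\pi}\right)$ is $\SH_{\bullet}\left(G_n,-\otimes\sigma^{\vee}\right)$-acyclic; dévissage along the finite quotients of the Borel filtration, together with its Mittag--Leffler property and the Milnor exact sequence for the Schwartz homology of an inverse limit, would then give $\SH_{\bullet}\bigl(G_n,\Gamma_{\calZ}(\calX,\calE_{|\cdot|^{s},\pi})\otimes\sigma^{\vee}\bigr)=0$. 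Feeding this into the long exact sequence of the displayed short exact sequence shows that the inclusion $\pi\otimes\omega_n\hookrightarrow\left(|\cdot|^{s}\times\pi\right)|_{G_n}$ induces an isomorphism on $\SH_{\bullet}\left(G_n,-\otimes\sigma^{\vee}\right)$; dualizing in degree $0$ and using $\SH_{0}\left(G_n,V\otimes\sigma^{\vee}\right)^{*}\cong\Hom_{G_n}\left(V,\sigma\right)$ --- a continuous $G_n$-map out of a smooth representation lands automatically in the smooth dual $\left(\sigma^{\vee}\right)^{\vee}=\sigma$ --- yields the asserted natural isomorphism $\Hom_{G_n}\left(|\cdot|^{s}\times\pi,\sigma\right)\simeq\Hom_{G_n}\left(\pi\otimes\omega_n,\sigma\right)$.

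The step I expect to be the main obstacle is this last dévissage: passing from $\SH_{\bullet}$-acyclicity of each of the \emph{infinitely many} graded pieces to acyclicity of the completed inverse limit $\Gamma_{\calZ}$ requires controlling a $\varprojlim^{1}$-term in the Schwartz-analytic setting and verifying the Mittag--Leffler condition for Borel's filtration; I would carry this out following the Bessel-model homology computations of \cite{xue2020besselShomology}. A milder companion point is why only countably many $s$ must be excluded rather than finitely many: each filtration step $j$ contributes its own finite set $E_{j}$, and it is exactly the fact that the spectral parameter $s+c_{j}$ moves along a line --- whereas Lemma \ref{L:inftesimalArguement} excises only a finite Zariski-closed set --- that keeps each $E_{j}$ finite. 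Finally, although short, the $s$-independent identification $\Gamma\left(\calU,\calE_{|\cdot|^{s},\pi}\right)|_{G_n}\cong\pi\otimes\omega_n$ --- the bridge between the Rankin--Selberg and Fourier--Jacobi models --- should be written out with care, as it is the conceptual heart of the lemma.
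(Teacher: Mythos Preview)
Your proposal is correct and follows essentially the same route as the paper: apply the orbit analysis of \S\ref{sub:orbit_analysis_on_grassmannian_rank_1_case} with $k=1$, identify the open-orbit contribution (restricted to $G_n$) with $\pi\otimes\omega_n$, and kill the closed-orbit tower via Lemma \ref{L:inftesimalArguement}. The paper's own proof is a terse three-line version of exactly this argument; your write-up simply makes explicit the points the paper leaves implicit --- in particular the reason the exceptional set is only countable (one finite set $E_j$ per filtration step) and the d\'evissage through the completed Borel filtration, which the paper subsumes in the single phrase ``This implies that\ldots'' and which is indeed handled in the Schwartz-homology literature you cite.
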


\vskip 5pt

\begin{proof}
Apply the discussion above to $|\cdot|^s\times\pi$. By Lemma \ref{L:inftesimalArguement}, we know that for all but at most countably many $s\in \sqrt{-1}\cdot\R$, we have
\[
    \SH_i\left(G_n, \left(\Sym^j\std_{1,\C}\otimes|\cdot|^{s+\frac{1}{2}}\times\left(\pi~\big|_{G_{n-1}}\right)\right)\otimes\sigma^\vee\right) = 0
\]
for all $i,j = 0,1,2,\cdots$. This implies that for all but at most countably many $s\in\sqrt{-1}\cdot\R$, we have
\[
    \Hom_{G_n}\left(|\cdot|^s\times\pi,\sigma\right) \simeq \Hom_{G_n}\left(\leftindex^u\ind_{G_n}^{M_{n+1}}\pi|{\det}_{n}|^{-\frac{1}{2}}~\big|_{G_n},\sigma\right).
\]  
Note that $\leftindex^u\ind_{G_n}^{M_{n+1}}\pi|{\det}_{n}|^{-\frac{1}{2}}~\big|_{G_n}\simeq \pi\otimes\omega_n$, hence we are done.

\end{proof}

\vskip 5pt

To give the second application, we specialize to the case that $F=\R$ and $n=k=1$. Let $\delta$ be an irreducible unitary discrete series representation of $G_2$. We would like to investigate how $\delta$ decomposes when restricted to $M_2$. It is well-known that discrete series of $G_2$ can be constructed as the quotient of a principal series by a finite-dimensional subrepresentation. To be more precise, there exists a unitary character $\chi$ of $G_1$ and a positive integer $m\in\Z_{>0}$, such that $\delta$ fits into an exact sequence
\begin{equation}\label{E:dsExactSeq}
    0 \lra \Pi_f \lra \chi\,{\rm sgn}^{m-1}|\cdot|^{-\frac{m}{2}}\times\chi|\cdot|^{\frac{m}{2}} \lra \delta \lra 0,
\end{equation}
where $\Pi_f=\chi\,{\rm sgn}^{m-1}|\cdot|^{\frac{1-m}{2}}{\rm Sym}^{m-1}\C^2$ is an $m$-dimensional representation of $G_2$. Applying the discussion in this section to $\chi\,{\rm sgn}^{m-1}|\cdot|^{-\frac{m}{2}}\times\chi|\cdot|^{\frac{m}{2}}$, one knows that:

\begin{itemize}
    \item as a representation of $M_2$, $\chi\,{\rm sgn}^{m-1}|\cdot|^{-\frac{m}{2}}\times\chi|\cdot|^{\frac{m}{2}}$ has a subrepresentation
    \[
        \Gamma_{op} \simeq \leftindex^u\ind_{G_1}^{M_2}\chi|\cdot|^{\frac{m-1}{2}};
    \]

    \vskip 5pt

    \item the quotient of $\chi\,{\rm sgn}^{m-1}|\cdot|^{-\frac{m}{2}}\times\chi|\cdot|^{\frac{m}{2}}$ by $\Gamma_{op}$ has a descending filtration, with each graded pieces isomorphism to the inflation of
    \[
        \chi\,{\rm sgn}^{j+m-1}|\cdot|^{j+\frac{1-m}{2}}
    \]
    to $M_2$, where $j=0,1,2,\cdots$.
\end{itemize}
\vskip 5pt
On the other hand, easy to see that the restriction of $\Pi_f$ to $M_2$ has a filtration, with each graded piece isomorphic to the inflation of
\[
        \chi\,{\rm sgn}^{j+m-1}|\cdot|^{j+\frac{1-m}{2}}
\]
to $M_2$, where $j=0,1,2,\cdots,m-1$. We can conclude that:

\begin{Lem}\label{L:Res.d.s.To.Mirabolic}
In the context of the above discussion (i.e. $F=\R$ and $n=k=1$), let $\delta$ be the discrete series representation of $G_2$ which fits into the exact sequence (\ref{E:dsExactSeq}). Then there is an exact sequence of $M_2$-representations
\[
    0 \lra \Gamma_{op} \lra \delta~\big|_{M_2} \lra \Gamma_\delta \lra 0,
\] 
where $\Gamma_{op}$ is defined as above, and $\Gamma_\delta$ has a descending filtration, with each graded piece isomorphic to the inflation of
    \[
        \chi\,{\rm sgn}^{j}|\cdot|^{j+\frac{m-1}{2}}
    \]
    to $M_2$, where $j=1,2,\cdots$.
\end{Lem}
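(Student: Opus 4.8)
The plan is to assemble the exact sequence from the two pieces of orbit analysis already carried out in this subsection, applied to the principal series $\chi\,{\rm sgn}^{m-1}|\cdot|^{-\frac{m}{2}}\times\chi|\cdot|^{\frac{m}{2}}$ of $G_2$, and then to transport the information across the defining exact sequence (\ref{E:dsExactSeq}). First I would record what the orbit analysis gives for the principal series restricted to $M_2$: there is a short exact sequence
\[
    0 \lra \Gamma_{op} \lra \left(\chi\,{\rm sgn}^{m-1}|\cdot|^{-\frac{m}{2}}\times\chi|\cdot|^{\frac{m}{2}}\right)\big|_{M_2} \lra \mathcal{Q} \lra 0,
\]
where $\Gamma_{op} \simeq \leftindex^u\ind_{G_1}^{M_2}\chi|\cdot|^{\frac{m-1}{2}}$ is the open-orbit contribution and $\mathcal{Q}$ carries a descending filtration whose graded pieces are the inflations of $\chi\,{\rm sgn}^{j+m-1}|\cdot|^{j+\frac{1-m}{2}}$ for $j = 0,1,2,\dots$. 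Likewise the restriction of the finite-dimensional piece $\Pi_f = \chi\,{\rm sgn}^{m-1}|\cdot|^{\frac{1-m}{2}}{\rm Sym}^{m-1}\C^2$ to $M_2$ is filtered by the inflations of the \emph{same} characters $\chi\,{\rm sgn}^{j+m-1}|\cdot|^{j+\frac{1-m}{2}}$ but only for $j = 0,1,\dots,m-1$ — these are the "low" layers of $\mathcal{Q}$.

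Next I would run a diagram chase. Since (\ref{E:dsExactSeq}) exhibits $\Pi_f$ as a subrepresentation of the principal series with quotient $\delta$, restricting everything to $M_2$ gives a short exact sequence $0 \to \Pi_f|_{M_2} \to (\text{p.s.})|_{M_2} \to \delta|_{M_2} \to 0$. The key point is that $\Pi_f|_{M_2}$ is finite-dimensional, hence its image in $(\text{p.s.})|_{M_2}$ must meet $\Gamma_{op}$ trivially: $\Gamma_{op}$ is a Schwartz induction from $G_1$ to $M_2$, which has no finite-dimensional subrepresentation (any nonzero $M_2$-subrepresentation is infinite-dimensional since the unipotent radical $F$ of $M_2$ acts without nonzero invariants on it — one can see this by looking at the $F$-action on Schwartz functions, or invoke Frobenius reciprocity / Lemma \ref{L:vanishingLem1}-type vanishing). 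Therefore $\Gamma_{op}$ embeds into $\delta|_{M_2}$, and the quotient $\Gamma_\delta := \delta|_{M_2}/\Gamma_{op}$ is identified with $\mathcal{Q}/(\text{image of }\Pi_f|_{M_2})$, i.e. $\mathcal{Q}$ with its bottom $m$ graded layers ($j = 0,\dots,m-1$) removed. Re-indexing by $j' = j - m + 1 \geq 1$, the surviving layers $\chi\,{\rm sgn}^{j+m-1}|\cdot|^{j+\frac{1-m}{2}}$ with $j \geq m$ become $\chi\,{\rm sgn}^{j'}|\cdot|^{j'+\frac{m-1}{2}}$ with $j' = 1,2,\dots$, using that ${\rm sgn}^{j+m-1} = {\rm sgn}^{j'}$ (the exponents differ by $2(m-1)$, an even number) and $j + \tfrac{1-m}{2} = j' + \tfrac{m-1}{2}$. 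This is exactly the asserted filtration on $\Gamma_\delta$.

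A point requiring a little care is that I am manipulating filtrations of infinite length and passing to quotients: I should check that removing the finite initial segment corresponding to $\Pi_f|_{M_2}$ from the descending filtration of $\mathcal{Q}$ really does yield a descending filtration on $\Gamma_\delta$ with the claimed graded pieces, rather than merely a subquotient statement. This is where Borel's lemma (Proposition \ref{P:BorelLemma}) is doing the real work: the filtration there is canonical and its graded pieces are honest Schwartz sections, and the finite-dimensional subrepresentation $\Pi_f|_{M_2}$ is forced to be a \emph{sub}object of the filtration (it sits in the part of $\mathcal{Q}$ killed by a high enough power of the conormal direction), so the quotient filtration is well-behaved. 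I expect this bookkeeping — confirming compatibility of the $\Pi_f$-filtration with the Borel filtration on $\mathcal{Q}$ and that the complementary part is precisely the $j \geq m$ tail — to be the main (though still routine) obstacle; the rest is the elementary parity and exponent arithmetic recorded above.
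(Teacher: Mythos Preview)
Your proposal is correct and follows essentially the same route as the paper: both reduce the lemma to the single claim $\Gamma_{op}\cap \Pi_f = 0$, after which the filtration bookkeeping (removing the first $m$ layers of $\mathcal{Q}$ and re-indexing) is exactly as you describe. The paper simply cites \cite[Prop.~5.2.4]{chen2021local} for this intersection claim, whereas you supply a direct argument via the absence of finite-dimensional subrepresentations in $\Gamma_{op}$; your argument is sound (the unipotent radical of $M_2$ acts by translation on $G_1\backslash M_2\cong F$ and hence has no nonzero Schwartz invariants), and your character-exponent arithmetic confirming that the image of $\Pi_f$ in $\mathcal{Q}$ avoids $\mathcal{Q}_m$ is the right way to pin down the filtration compatibility you flagged.
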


\vskip 5pt

\begin{proof}
By our discussions, to show this lemma it suffices to show that $\Gamma_{op}\cap \Pi_f = 0$. This can be proved as in \cite[Prop. 5.2.4]{chen2021local}. 

\end{proof}

\subsection{Rankin--Selberg models of higher corank} 
\label{sub:rankin_selberg_models_of_higher_corank}

Now back to the general case. In the proof of our main result, some more general models will be involved. Let $m\leq n$ be two non-negative integers. We set $Q_{m,n}$ to be the standard parabolic subgroup of $G_{n+1}$ with Levi component
\[
    G_{m+1}\times\underbrace{G_1\times\cdots\times G_1}_{n-m\textit{-times}},
\]
and denote its unipotent radical by $U_{m,n}$. As before, we regard $G_m$ as a subgroup of $G_{m+1}$. The corank $n+1-m$ \textit{Rankin--Selberg subgroup} of $G_{n+1}$ is defined to be
\[
    RS_{m,n} = G_m\cdot U_{m,n}.
\]
Fix a non-trivial additive character $\psi_F$ of $F$; we can define a generic character
\[
    \psi_{m,n}: U_{m,n} \lra \C, \quad u \longmapsto \psi_F\left(\sum_{i=1}^{n-m}u_{m+i,m+i+1}\right),
\]
where $u_{a,b}$ is the $\left(a,b\right)$-entry of $u\in U_{m,n}$. We are interested in determining the space
\[
    \Hom_{RS_{m,n}}\left(\pi,\sigma\boxtimes \psi_{m,n}\right)
\]
for irreducible representations $\pi$ of $G_{n+1}$ and $\sigma$ of $G_m$. These models also enjoy the multiplicity one property \cite{MR3384460}. It is worth noting that: 

\begin{itemize}
    \item when $m=n$, by definition $RS_{m,n}=G_n$, this recovers the corank $1$ model considered at the beginning of this paper;

    \vskip 5pt

    \item when $m=0$, by definition $RS_{m,n}$ is the unipotent radical of the Borel subgroup, this recovers the usual Whittaker model.
\end{itemize}

\vskip 5pt

Next we follow \cite[Sect. 6]{MR3384460} to relate these general models to the corank $1$ case. 

\begin{Prop}\label{P:ReductionToCorank1}
Let $\pi$ be a finite length representation of $G_{n+1}$, and $\sigma$ a finite length representation of $G_m$. Then there exists an exceptional subset ${\rm Ex}\subset\C^{n+1-m}$, which is a union of countably many Zariski closed proper subsets of $\C^{n+1-m}$, such that for all $\left(s_1, s_2, \cdots, s_{n+1-m}\right) \in \C^{n+1-m}\,\backslash\, {\rm Ex}$, there is a natural isomorphism
\[
    \Hom_{G_{n+1}}\left(\sigma^\vee\times I\left(s_1,s_2,\cdots,s_{n+1-m}\right)\otimes\omega_{n+1}, \pi^\vee\right) \simeq \Hom_{RS_{m,n}}\left(\pi,\sigma\boxtimes \psi_{m,n}\right).
\]
Here $I\left(s_1,s_2,\cdots,s_{n+1-m}\right)\coloneqq|\cdot|^{s_1}\times|\cdot|^{s_2}\times\cdots\times|\cdot|^{s_{n+1-m}}$.
\end{Prop}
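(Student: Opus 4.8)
The plan is to proceed by induction on the corank $n+1-m$, using the orbit analysis of \S\ref{sub:orbit_analysis_on_grassmannian_rank_1_case} to peel off one $G_1$-factor of the Levi of $Q_{m,n}$ at a time. First I would set up the base case $n+1-m=1$: here $RS_{m,n}=G_n$ and the claim is precisely that
\[
    \Hom_{G_{n+1}}\left(\sigma^\vee\times|\cdot|^{s_1}\otimes\omega_{n+1}, \pi^\vee\right) \simeq \Hom_{G_n}\left(\pi,\sigma\boxtimes\psi_{n,n}\right) = \Hom_{G_n}\left(\pi,\sigma\right)
\]
for all but countably many $s_1\in\sqrt{-1}\cdot\R$; but since $\psi_{n,n}$ is trivial and $\sigma^\vee\times|\cdot|^{s_1}\otimes\omega_{n+1}$ is what one gets from Lemma \ref{L:FJToBessel} (up to contragredient bookkeeping and the identification $\leftindex^u\ind_{G_n}^{M_{n+1}}(-)\simeq(-)\otimes\omega_n$), this follows by combining Lemma \ref{L:FJToBessel} with the standard manipulations relating $\Hom_{G_n}(\pi,\sigma)$ to $\Hom_{G_{n+1}}(\sigma^\vee\times\pi^?,\pi^\vee)$ via Frobenius reciprocity. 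Actually the cleaner route, following \cite[Sect. 6]{MR3384460}, is to first dualize: $\Hom_{RS_{m,n}}(\pi,\sigma\boxtimes\psi_{m,n})\simeq\Hom_{RS_{m,n}}(\sigma^\vee\boxtimes\psi_{m,n}^{-1},\pi^\vee)$, and work with the latter, expressing it as a Schwartz-homology group $\SH_0\big(RS_{m,n},\pi^\vee\otimes(\sigma^\vee\boxtimes\psi_{m,n}^{-1})\big)^*$ and then comparing to the ambient group $G_{n+1}$.

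The inductive step is the heart of the argument. Write $RS_{m,n}=G_m\cdot U_{m,n}$ and observe that $U_{m,n}=U_{m+1,n}\cdot(\text{extra root spaces})$, so that $RS_{m,n}\subset RS_{m+1,n}$; more usefully, $RS_{m,n}$ sits inside $Q_{m+1,n}\cap(\text{mirabolic-type subgroup})$. The idea is to realize $\Hom_{RS_{m,n}}(\pi,\sigma\boxtimes\psi_{m,n})$ by first inducing $\sigma\boxtimes\psi_{m,n}$ (twisted by the last $|\cdot|^{s_{n+1-m}}$) up one step — from $RS_{m,n}$ to $RS_{m+1,n}$ — using a mirabolic orbit decomposition: restrict $\sigma^\vee\times|\cdot|^{s}$ (a representation of $G_{m+2}$, or rather the relevant induced module on $G_{n+1}$) to the mirabolic $M_{m+2}$ via the two-orbit analysis of \S\ref{sub:orbit_analysis_on_grassmannian_rank_1_case} with $k=1$, and note that the closed-orbit contributions $\big(\Sym^j\std_{1,\C}\otimes\sigma^\vee\big)|\cdot|^{s+\frac12}\times(\cdots)$ all have Schwartz homology against $\pi$ vanishing for $s$ outside a countable union of Zariski-closed proper subsets, by Lemma \ref{L:inftesimalArguement}. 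What survives is the open-orbit piece, which is a Schwartz induction $\leftindex^u\ind$ carrying exactly the character $\psi$ on the new unipotent root space and dragging along the Weil representation factor; applying Frobenius reciprocity (Proposition \ref{P:Frobenious}) then identifies the $G_{n+1}$-Hom space with the $RS_{m+1,n}$-Hom space, now with one fewer $|\cdot|^{s_i}$ and with $\psi_{m,n}$ upgraded to $\psi_{m+1,n}$. Iterating $n+1-m$ times, accumulating a countable union of exceptional sets at each stage, yields the claimed isomorphism. One should also track the Weil-representation twist carefully: at each induction step a new $\omega$-factor appears (as in Lemma \ref{L:FJToBessel}, where $\leftindex^u\ind_{G_n}^{M_{n+1}}\pi|\det_n|^{-1/2}|_{G_n}\simeq\pi\otimes\omega_n$), and these assemble into the single $\omega_{n+1}$ on the left-hand side.

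I expect the main obstacle to be bookkeeping rather than a conceptual difficulty: matching up the various half-integral powers of $|\det|$ (the $\delta_P^{1/2}$ normalizations, the $|\det|^{\pm k/2}$ and $|\det|^{(n+1-k)/2}$ twists from the orbit analysis, and the $|\det|^{-1/2}$ in the Weil representation) so that the final statement comes out with exactly $\sigma^\vee\times I(s_1,\dots,s_{n+1-m})\otimes\omega_{n+1}$ and $\pi^\vee$; and ensuring that the vanishing hypothesis of Lemma \ref{L:inftesimalArguement} genuinely kills \emph{all} graded pieces $\Sym^j$ of the closed-orbit filtration simultaneously, which requires knowing the filtration is exhaustive in the appropriate topological sense (so that vanishing of every graded piece forces vanishing of the whole, via the $\lim_{\longleftarrow}$ description in Proposition \ref{P:BorelLemma} together with exactness of the relevant homology long exact sequences). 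A secondary subtlety is that $\pi$ and $\sigma$ are only assumed finite length, not irreducible or unitary, so one cannot appeal to Vogan's classification here; fortunately the whole argument is purely about orbit analysis, infinitesimal characters, and Frobenius reciprocity, all of which are available for arbitrary finite-length (Casselman--Wallach) representations. Once the corank-one case and the single inductive step are in place, the statement follows by a straightforward induction, with ${\rm Ex}$ being the union over all $n+1-m$ stages of the exceptional sets produced by Lemma \ref{L:inftesimalArguement}, each of which is a countable union of Zariski-closed proper subsets, hence so is the total.
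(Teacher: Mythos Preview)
Your inductive-on-corank strategy is genuinely different from the paper's, and it has a real gap in the treatment of $\omega_{n+1}$.

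The paper does \emph{not} peel off one $|\cdot|^{s_i}$ at a time. It proceeds in three direct stages: (i) filter $\omega_{n+1}$ by support, and use Lemma \ref{L:inftesimalArguement} to show that for generic $\underline{s}$ only the open piece $\omega_{n+1}^0\simeq\big(\leftindex^u\ind_{M_{n+1}^t}^{G_{n+1}}\C\big)\otimes|\det|^{-1/2}$ contributes; (ii) perform a single orbit analysis of $\sigma^\vee\times I(s_1,\dots,s_{n+1-m})$ restricted to $M_{n+1}^t$ on the \emph{full} Grassmannian $P_{m,n+1-m}\backslash G_{n+1}$, not the rank-one Grassmannian of \S\ref{sub:orbit_analysis_on_grassmannian_rank_1_case}, again killing the closed-orbit pieces via Lemma \ref{L:inftesimalArguement}; (iii) for the surviving open-orbit piece, describe the restriction of the full principal series $I(s_1,\dots,s_{n+1-m})$ to the smaller mirabolic $\leftindex^w M^t_{n+1-m}$ all at once, quoting the filtrations of Xue \cite[Prop.~5.1]{xue2020besselShomology} and of the first author \cite[Prop.~5.2.5]{chen2021local} (transported via the MVW involution). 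The final surviving piece is exactly $\leftindex^u\ind_{RS_{m,n}}^{G_{n+1}}\big(\sigma^\vee|\det|^{(n-m)/2}\boxtimes\psi_{m,n}^{-1}\big)$, which gives the Rankin--Selberg Hom space by Frobenius reciprocity.

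The gap in your proposal is the role of $\omega_{n+1}$. The claim that step-by-step $\omega$-factors ``assemble into the single $\omega_{n+1}$'' is not correct: each application of the $k=1$ orbit analysis of \S\ref{sub:orbit_analysis_on_grassmannian_rank_1_case} produces a Weil representation on a \emph{smaller} group, and these do not combine into $\omega_{n+1}$ on $G_{n+1}$. In the paper's argument $\omega_{n+1}$ is dispatched once and for all in stage (i), converting the $G_{n+1}$-Fourier--Jacobi model into a mirabolic-restriction problem; it does not reappear. Your base case $m=n$ is likewise not settled by Lemma \ref{L:FJToBessel}: that lemma relates a corank-one Bessel model to a Fourier--Jacobi model on $G_n$ (involving $\omega_n$), whereas the proposition's left-hand side is a Fourier--Jacobi model on $G_{n+1}$ (involving $\omega_{n+1}$); bridging these already requires stages (i)--(ii). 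A secondary issue is that $RS_{m,n}$ and $RS_{m+1,n}$ are \emph{not} nested (the reductive parts satisfy $G_m\subset G_{m+1}$ but the unipotent radicals satisfy $U_{m,n}\supset U_{m+1,n}$), so ``inducing from $RS_{m,n}$ to $RS_{m+1,n}$'' makes no sense as stated; one must pass through an intermediate group such as $G_{m+1}\cdot U_{m,n}=M_{m+2}\cdot U_{m+1,n}$, and the analysis there is more delicate than you indicate. Your scheme is not unsalvageable---once stage (i) is done, the restriction of $I(s_1,\dots)$ to the mirabolic can indeed be unwound one character at a time, which is essentially how the cited results of Xue and the first author are proved---but stages (i) and (ii) must be carried out regardless, and that is where the substance lies.
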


\vskip 5pt

\begin{proof}
A similar statement in the non-Archimedean setting has been established in \cite[Sect. 6]{MR3384460}; the argument there works for the Archimedean case with some mild modifications. For the convenience of readers, we briefly sketch the proof here.

\vskip 5pt

Recall that $\omega_{n+1}$ is realized on the space of Schwartz functions on $F^{n+1}$. Consider the subspace $\omega_{n+1}^0$ of $\omega_{n+1}$ consisting of Schwartz functions supported on $F^{n+1}\backslash\left\{0\right\}$, it induces a filtration of the Weil representation
\[
    0 \lra \omega_{n+1}^0 \lra \omega_{n+1} \lra \omega_{n+1}/\omega_{n+1}^0 \lra 0.
\] 
According to Borel's lemma (Proposition \ref{P:BorelLemma}), $\omega_{n+1}/\omega_{n+1}^0$ has a filtration, with each graded piece
\[
    \left(\Sym^j\std_{n+1,\C}\right)\otimes |{\det}_{n+1}|^{-\frac{1}{2}},
\]
where $j=0,1,\cdots$. By Lemma \ref{L:inftesimalArguement}, one knows that there exists an exceptional subset ${\rm Ex}_\alpha\subset\C^{n+1-m}$, which is a union of countably many Zariski closed proper subsets, such that for all $\left(s_1, s_2, \cdots, s_{n-m+1}\right) \in \C^{n+1-m}\,\backslash\, {\rm Ex}_\alpha$, one has
\[
    \SH_i\left({G_{n+1}},\left(\sigma^\vee\times I\left(s_1,s_2,\cdots\right)\otimes\left(\Sym^j\std_{n+1,\C}\right)\otimes |{\det}_{n+1}|^{-\frac{1}{2}}\right)\otimes\pi\right) = 0
\]
for all $i,j= 0,1,2,\cdots$. This implies that
\[
    \Hom_{G_{n+1}}\left(\sigma^\vee\times I\left(s_1,s_2,\cdots\right)\otimes\omega_{n+1},\pi^\vee\right) 
    \simeq \Hom_{G_{n+1}}\left(\sigma^\vee\times I\left(s_1,s_2\cdots\right)\otimes\omega_{n+1}^0, \pi^\vee\right).
\]
\vskip 5pt

Now note that 
\[
    \omega_{n+1}^0 \simeq \left(\leftindex^u\ind_{M_{n+1}^t}^{G_{n+1}} \C\right)\otimes |{\det}_{n+1}|^{-\frac{1}{2}},
\]
where $M_{n+1}^t$ is the transpose of the mirabolic subgroup $M_{n+1}$, or equivalently, the subgroup of $G_{n+1}$ stabilizing
\[
    e_{n+1} \coloneqq \left(0,\cdots,0,1\right)^t\in F^{n+1}
\]
under the standard action. Hence we have
\[
    \sigma^\vee\times I\left(s_1,s_2,\cdots\right)\otimes\omega_{n+1}^0 \simeq |{\det}_{n+1}|^{-\frac{1}{2}}\otimes\leftindex^u\ind_{M_{n+1}^t}^{G_{n+1}} \left(\sigma^\vee\times I\left(s_1,s_2,\cdots\right)~\big|_{M_{n+1}^t}\right).
\]
To describe the representation on the right hand side of above, once again we appeal to the Mackey theory. Let $P_{m,n+1-m}$ be the standard parabolic subgroup of $G_{n+1}$ with Levi component $G_m\times G_{n+1-m}$, and $\calE_{\sigma,\underline{s}}$ the tempered vector bundle over the Grassmannian $\calX = P_{m,n+1-m}\backslash G_{n+1}$ corresponding to $\sigma^\vee\times I\left(s_1,s_2,\cdots\right)$. Similar to discussions in Section \ref{sub:orbit_analysis_on_grassmannian_rank_1_case}, there are exactly two orbits of $M_{n+1}^t$ on the $\calX$.

\vskip 5pt

$\bullet$ The closed orbit $\calZ$ consisting of $m$-dimensional subspaces $X\subset F^{n+1}$ with the property that $e_{n+1}\in X$. 
    Let $T_m$ be the stabilizer in $M_{n+1}^t$ of a representative in this orbit. It fits into an exact sequence
    \[
        1 \lra N_{m,n+1-m} \lra T_m \lra M_m^t\times G_{n+1-m} \lra 1,
    \]
    where $N_{m,n+1-m}$ is the unipotent radical of $P_{m,n+1-m}$. By some elementary computations, we know that the conormal bundle of $\calZ$ in $\calX$ is
    \[
        N_{\calZ|\calX}^\vee \simeq M_{n+1}^t\times_{T_m}\std_{n+1-m}^\vee,
    \]
    where the action of $T_m$ on $\std_{n+1-m}^\vee$ descends to $M_m^t\times G_{n+1-m}$, $G_{n+1-m}$ acts as $\std_{n+1-m}^\vee$, and $M_m^t$ acts trivially. Applying Borel's lemma, the contribution of the closed orbit (and derivatives) $\Gamma_\calZ\left(\calX,\calE_{\sigma,\underline{s}}\right)$ has a filtration, with each graded piece  
    \[
        \leftindex^u\ind_{T_m}^{M^t_{n+1}}\left(\sigma^\vee|{\det}|^{\frac{n+1-m}{2}}~\big|_{M_m^t}\boxtimes\,\Sym^j\std_{n+1-m,\C}^\vee\otimes I\left(s_1,s_2,\cdots\right)|{\det}|^{-\frac{m}{2}}\right),
    \]
    where $\sigma^\vee|{\det}|^{\frac{n+1-m}{2}}~\big|_{M_m^t}\boxtimes\,\Sym^j\std^\vee\otimes I\left(s_1,\cdots\right)|{\det}|^{-\frac{m}{2}}$ is a representation of $M_m^t\times G_{n+1-m}$, we regard it as a representation of $T_m$ by inflation; and $j=0,1,2,\cdots$. After applying the functor $|{\det}_{n+1}|^{-\frac{1}{2}}\otimes\leftindex^u\ind_{M_{n+1}^t}^{G_{n+1}}$, each graded piece becomes to
    \[
        |{\det}_{n+1}|^{-\frac{1}{2}}\otimes\leftindex^u\ind_{T_m}^{G_{n+1}}\left(\sigma^\vee|{\det}|^{\frac{n+1-m}{2}}~\big|_{M_m^t}\boxtimes\,\Sym^j\std_{n+1-m,\C}^\vee\otimes I\left(s_1,\cdots\right)|{\det}|^{-\frac{m}{2}}\right)
    \]
    \[
        \simeq \left(\sigma^\vee\otimes\omega_m\right)\times \left(\Sym^j\std_{n+1-m,\C}^\vee\otimes I\left(s_1,\cdots\right)|{\det}|^{-\frac{1}{2}}\right),
    \]
    where $j=0,1,2,\cdots$. 

    \vskip 5pt

$\bullet$ The open orbit $\calX\backslash\calZ$ consisting of $m$-dimensional subspaces $X\subset F^{n+1}$ with the property that $e_{n+1}\notin X$. Let $S_m$ be the stabilizer in $M_{n+1}^t$ of a representative in this orbit. It fits into an exact sequence
    \[
        1 \lra N^0_{m,n+1-m} \lra S_m \lra G_m\times \leftindex^wM^t_{n+1-m} \lra 1,
    \]
    where $N_{m,n+1-m}$ is the unipotent radical of $P_{m,n+1-m}$, $N^0_{m,n+1-m}$ is the subgroup of $N_{m,n+1-m}$ consisting of elements of the form 
\[
        \left(\begin{array}{ccc}
                            I_m & 0 & X\\
                            ~ & 1 & ~\\
                            ~ & ~ & I_{n-m}
                    \end{array}\right), \qquad X\in {\rm Mat}_{m,n-m}\left(F\right);
\]
and $\leftindex^wM^t_{n+1-m}$ is the subgroup of $G_{n+1-m}$ consisting of elements of the form
\[
    \left(\begin{array}{cc}
                            1 & v\\
                            ~ & g
                    \end{array}\right), \qquad g\in G_{n-m}, v\in F^{n-m}.
\]
One can easily check that $\leftindex^wM^t_{n+1-m}$ is a conjugation of $M^t_{n+1-m}$. The contribution of the open orbit $\Gamma\left(\calX\backslash\calZ,\calE_{\sigma,\underline{s}}\right)$ is
    \[
        \leftindex^u\ind_{S_m}^{M^t_{n+1}}\left(\sigma^\vee|{\det}|^{\frac{n+1-m}{2}}\boxtimes \,I\left(s_1,s_2,\cdots\right)|{\det}|^{-\frac{m}{2}}~\big|_{\leftindex^wM^t_{n+1-m}}\right),
    \]
    where $\sigma^\vee|{\det}|^{\frac{n+1-m}{2}}\boxtimes \,I\left(s_1,s_2,\cdots\right)|{\det}|^{-\frac{m}{2}}~\big|_{\leftindex^wM^t_{n+1-m}}$ is a representation of $G_m\times \leftindex^wM^t_{n+1-m}$, and we regard it as a representation of $S_m$ by inflation. After applying the functor $|{\det}_{n+1}|^{-\frac{1}{2}}\otimes\leftindex^u\ind_{M_{n+1}^t}^{G_{n+1}}$, it becomes to
    \[
        |{\det}_{n+1}|^{-\frac{1}{2}}\otimes\leftindex^u\ind_{S_m}^{G_{n+1}}\left(\sigma^\vee|{\det}|^{\frac{n+1-m}{2}}\boxtimes \,I\left(s_1,s_2,\cdots\right)|{\det}|^{-\frac{m}{2}}~\big|_{\leftindex^wM^t_{n+1-m}}\right).
    \]

\vskip 5pt

Similarly, by using the infinitesimal character argument (Lemma \ref{L:inftesimalArguement}), there exists an exceptional subset ${\rm Ex}_\beta\subset \C^{n+1-m}$, which is a union of countably many Zariski closed proper subsets, such that for all $\left(s_1, \cdots, s_{n-m+1}\right) \in \C^{n+1-m}\,\backslash\, {\rm Ex}_\beta$, one has
\[
    \SH_i\left(G_{n+1}, \left(\sigma^\vee\otimes\omega_m\right)\times \left(\Sym^j\std_{n+1-m,\C}^\vee\otimes I\left(s_1,\cdots\right)|{\det}|^{-\frac{1}{2}}\right)\otimes\pi\right) = 0
\]
for all $i,j= 0,1,2,\cdots$. This implies that
\[
    \Hom_{G_{n+1}}\left(\sigma^\vee\times I\left(s_1,s_2,\cdots\right)\otimes\omega_{n+1}^0, \pi^\vee\right) 
\]
\[
    \simeq \Hom_{G_{n+1}}\left(|{\det}_{n+1}|^{-\frac{1}{2}}\otimes\leftindex^u\ind_{S_m}^{G_{n+1}}\left(\sigma^\vee|{\det}|^{\frac{n+1-m}{2}}\boxtimes \,I\left(s_1,s_2,\cdots\right)|{\det}|^{-\frac{m}{2}}~\big|_{\leftindex^wM^t_{n+1-m}}\right), \pi^\vee\right).
\]

\vskip 5pt

Finally, we need to describe the restriction of $I\left(s_1,s_2,\cdots\right)$ to the subgroup $\leftindex^wM^t_{n+1-m}$. The restriction of a principal series representation to a mirabolic subgroup has been studied by Xue \cite[Prop. 5.1]{xue2020besselShomology} when $F=\C$, and by the first author \cite[Prop. 5.2.5]{chen2021local} when $F=\R$ using Borel's lemma. 
Consider the \textit{MVW-involution}
\[
    c: G_{n+1-m} \lra G_{n+1-m}, \quad g \longmapsto w\cdot\leftindex^{t}g^{-1}\cdot w^{-1},
\]
where $w\in G_{n+1-m}$ is a suitably chosen element. Note that the MVW-involution of $G_{n+1-m}$ takes principal series representations to principal series representations, and takes the mirabolic subgroup $M_{n+1-m}$ to $\leftindex^wM^t_{n+1-m}$. Hence we can combine \cite[Prop. 5.1]{xue2020besselShomology}, \cite[Prop. 5.2.5]{chen2021local} with the MVW-involution to describe $I\left(s_1,s_2,\cdots\right)~\big|_{\leftindex^wM^t_{n+1-m}}$. We summarize the result as follows.

\vskip 5pt

$\bullet$ The representation $I\left(s_1,s_2,\cdots\right)~\big|_{\leftindex^wM^t_{n+1-m}}$ has a subrepresentation isomorphic to
    \[
        \leftindex^u\ind_{~U_{n+1-m}}^{\leftindex^wM^t_{n+1-m}}\psi_{n+1-m}^{-1},
    \]
    where $U_{n+1-m}$ is the subgroup of $\leftindex^wM^t_{n+1-m}$ consisting of upper triangular unipotent matrices, and $\psi_{n+1-m}$ is a generic character of $U_{n+1-m}$.

\vskip 5pt

$\bullet$ The quotient 
\[
    I\left(s_1,s_2,\cdots\right)~\big|_{\leftindex^wM^t_{n+1-m}} \big/ \leftindex^u\ind_{~U_{n+1-m}}^{\leftindex^wM^t_{n+1-m}}\psi_{n+1-m}^{-1}
\]
admits a $\leftindex^wM^t_{n+1-m}$-stable filtration whose graded pieces are of the form
\[
    \leftindex^u\ind_{~Q_{a,b,c}}^{\leftindex^wM^t_{n+1-m}} \tau_a\boxtimes\tau_b\boxtimes\tau_c,
\]
where:

\begin{itemize}
    \item[-] $Q_{a,b,c}$ is the intersection of $P_{a,b,c}$ with $\leftindex^wM^t_{n+1-m}$, where $a,b,c$ are non-negative integers such that $a+b+c= n+1-m$ and $b+c>0$, $P_{a,b,c}$ is the standard parabolic subgroup of $G_{n+1-m}$with Levi component $G_a\times G_b\times G_c$; 

    \vskip 5pt

    \item[-] the group $Q_{a,b,c}$ fits into an exact sequence
    \[
     1 \lra N_{a,b,c} \lra Q_{a,b,c} \lra \leftindex^wM^t_{a}\times G_b\times G_c \lra 1,
    \]
    where $N_{a,b,c}$ is the unipotent radical of $P_{a,b,c}$;

    \vskip 5pt

    \item[-] $\tau_a$ is a representation of $\leftindex^wM^t_{a}$, and is isomorphic to $\leftindex^u\ind_{~U_{a}}^{\leftindex^wM^t_{a}}\psi_{a}^{-1}$;

    \vskip 5pt

    \item[-] $\tau_b$ is a representation of $G_b$ which is of the form $\tau'_b\otimes \rho$, where $\tau'_b$ is a principal series representation of the same form as $\tau_c$ described below, and $\rho$ is a finite dimensional representation of $G_b$;

    \vskip 5pt

    \item[-] $\tau_c$ is a principal series representation of $G_c$ which is of the form
    \[
        \chi_1|\cdot|^{s_{i_1}+k_1}\times\chi_2|\cdot|^{s_{i_2}+k_2}\times\cdots\times\chi_c|\cdot|^{s_{i_c}+k_c},
    \]
    where $1\leq i_1,i_2,\cdots,i_c\leq n+1-m$, $\chi_1,\chi_2,\cdots,\chi_c$ are (conjugate) self-dual characters of $F^\times$, and $k_1,k_2,\cdots,k_c\in \Z$;

    \vskip 5pt

    \item[-] $\tau_a\boxtimes\tau_b\boxtimes \tau_c$ is a representation of $\leftindex^wM^t_{a}\times G_b\times G_c$, regarded as a representation of $Q_{a,b,c}$ by inflation. 
\end{itemize}
\vskip 5pt

Easy to check that
\[
    |{\det}_{n+1}|^{-\frac{1}{2}}\otimes\leftindex^u\ind_{S_m}^{G_{n+1}}\left(\sigma^\vee|{\det}|^{\frac{n+1-m}{2}}\boxtimes \,\left(\leftindex^u\ind_{~Q_{a,b,c}}^{\leftindex^wM^t_{n+1-m}} \tau_a\boxtimes\tau_b\boxtimes\tau_c\right)|{\det}_{n+1-m}|^{-\frac{m}{2}}\right)
\]
\[
    \simeq \leftindex^u\ind_{RS_{m,m+a-1}}^{G_{m+a}} \left(\sigma^\vee|\det|^{\frac{a-1}{2}}\boxtimes\psi^{-1}_{m,m+a-1}\right)\times \tau_b|\det|^{\ell_b}\times\tau_c|\det|^{\ell_c}
\]
for some half integer $\ell_b,\ell_c\in\frac{1}{2}\Z$. Once again, by using the infinitesimal characters argument (Lemma \ref{L:inftesimalArguement}), one can show that there exists an exceptional subset ${\rm Ex}_\gamma\subset\C^{n+1-m}$, which is a union of countably many Zariski closed proper subsets, such that for all $\left(s_1, s_2, \cdots, s_{n-m+1}\right) \in \C^{n+1-m}\,\backslash\, {\rm Ex}_\gamma$, one has
\[
    \SH_i\left(G_{n+1}, \left(\leftindex^u\ind_{RS_{m,m+a-1}}^{G_{m+a}} \left(\sigma^\vee|\det|^{\frac{a-1}{2}}\boxtimes\psi^{-1}_{m,m+a-1}\right)\times \tau_b|\det|^{\ell_b}\times\tau_c|\det|^{\ell_c}\right)\otimes \pi\right) = 0
\]
for all $i=0,1,2,\cdots$ and all $\leftindex^u\ind_{~Q_{a,b,c}}^{\leftindex^wM^t_{n+1-m}} \tau_a\boxtimes\tau_b\boxtimes\tau_c$. This implies that
\[
    \Hom_{G_{n+1}}\left(|{\det}_{n+1}|^{-\frac{1}{2}}\otimes\leftindex^u\ind_{S_m}^{G_{n+1}}\left(\sigma^\vee|{\det}|^{\frac{n+1-m}{2}}\boxtimes \,I\left(s_1,s_2,\cdots\right)|{\det}|^{-\frac{m}{2}}~\big|_{\leftindex^wM^t_{n+1-m}}\right), \pi^\vee\right)
\]
\[
    \simeq \Hom_{G_{n+1}}\left(\leftindex^u\ind_{RS_{m,n}}^{G_{n+1}} \left(\sigma^\vee|\det|^{\frac{n-m}{2}}\boxtimes\psi_{m,n}^{-1}\right),\pi^\vee\right).
\]
Let ${\rm Ex} = {\rm Ex}_\alpha\cup{\rm Ex}_\beta\cup{\rm Ex}_\gamma$ and combine all these isomorphisms, we eventually get
\[
    \Hom_{G_{n+1}}\left(\sigma^\vee\times I\left(s_1,s_2,\cdots\right)\otimes\omega_{n+1},\pi^\vee\right)
    \simeq \Hom_{RS_{m,n}}\left(\pi,\sigma\boxtimes \psi_{m,n}\right)
\]
for all $\left(s_1, s_2, \cdots, s_{n+1-m}\right) \in \C^{n+1-m}\,\backslash\, {\rm Ex}$ as desired. This completes the proof.

\end{proof}

\vskip 5pt

Combine Lemma \ref{L:FJToBessel} and Proposition \ref{P:ReductionToCorank1}, we obtain the following consequence.  
\begin{Cor}\label{C:ExchangeVariables}
Let $\pi$ be a finite length representation of $G_{n+1}$, and $\sigma$ a finite length representation of $G_n$. Then there exists a (at most) countable subset ${\rm Ex}\subset\sqrt{-1}\cdot\R$, such that for all $s_1, s_2\in \sqrt{-1}\cdot\R\,\backslash\, {\rm Ex}$, there is a natural isomorphism
\[
    \Hom_{G_n}\left(\pi,\sigma\right) \simeq \Hom_{G_{n+1}}\left(|\cdot|^{s_1}\times\sigma^\vee\times|\cdot|^{s_2},\pi^\vee\right).
\]   
\end{Cor}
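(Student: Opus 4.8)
The plan is to combine the two reduction results already established, namely Lemma \ref{L:FJToBessel} (which converts a Fourier--Jacobi model into a corank-one Rankin--Selberg model after twisting by a generic induction) and Proposition \ref{P:ReductionToCorank1} in the special case $m=n$ (where $RS_{m,n}=G_n$ and $\psi_{m,n}$ is trivial). First I would apply Proposition \ref{P:ReductionToCorank1} with $m=n$: there it says that for all $(s_1,\ldots,s_{n+1-m})=(s_1)\in\C\setminus{\rm Ex}$ one has a natural isomorphism
\[
    \Hom_{G_{n+1}}\left(\sigma^\vee\times |\cdot|^{s_1}\otimes\omega_{n+1},\pi^\vee\right)\simeq \Hom_{G_n}\left(\pi,\sigma\right).
\]
So the left-hand side, for generic purely imaginary $s_1$, already computes the corank-one Hom space we want. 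It remains to reinterpret the Fourier--Jacobi-type model $\Hom_{G_{n+1}}\left(\sigma^\vee\times|\cdot|^{s_1}\otimes\omega_{n+1},\pi^\vee\right)$ so that the Weil representation $\omega_{n+1}$ is traded for an extra induction factor.

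The second step is to apply Lemma \ref{L:FJToBessel} to the group $G_{n+1}$, with the roles of $\pi$ and $\sigma$ there played by $\sigma^\vee\times|\cdot|^{s_1}$ (a finite length representation of $G_{n+1}$) and $\pi^\vee$ (a finite length representation of $G_{n+1}$), respectively. That lemma gives, for all but countably many $s_2\in\sqrt{-1}\cdot\R$,
\[
    \Hom_{G_{n+1}}\left(|\cdot|^{s_2}\times\left(\sigma^\vee\times|\cdot|^{s_1}\right),\pi^\vee\right)\simeq \Hom_{G_{n+1}}\left(\left(\sigma^\vee\times|\cdot|^{s_1}\right)\otimes\omega_{n+1},\pi^\vee\right).
\]
Since parabolic induction is associative, $|\cdot|^{s_2}\times\sigma^\vee\times|\cdot|^{s_1}$ is unambiguous, and after relabelling we get exactly $\Hom_{G_{n+1}}\left(|\cdot|^{s_1}\times\sigma^\vee\times|\cdot|^{s_2},\pi^\vee\right)$ (the two imaginary parameters are on equal footing, so the naming is cosmetic). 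Concatenating the two isomorphisms, and taking ${\rm Ex}\subset\sqrt{-1}\cdot\R$ to be the union of the countable exceptional set coming from Proposition \ref{P:ReductionToCorank1} (intersected with the imaginary axis) and the countable set coming from Lemma \ref{L:FJToBessel}, yields the claim for all $s_1,s_2\in\sqrt{-1}\cdot\R\setminus{\rm Ex}$.

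The only genuine subtlety, rather than an obstacle, is bookkeeping of the exceptional sets: Lemma \ref{L:FJToBessel} is applied with $s_1$ already fixed, so a priori the exceptional set for $s_2$ could depend on $s_1$; one must check that one can choose a single countable ${\rm Ex}$ that works for a cofinite (in the measure-zero-complement sense) set of pairs. This is handled by the usual Fubini-type argument on countable unions of proper Zariski-closed sets in $\C^2$ — equivalently, apply Proposition \ref{P:ReductionToCorank1} to $G_{n+1}$ with $m=n$ but now allowing the corank-two variable, i.e. directly invoke Proposition \ref{P:ReductionToCorank1} in a two-variable form — which shows the joint exceptional locus in $\C^2$ is a countable union of proper Zariski-closed subsets, hence meets $\sqrt{-1}\cdot\R\times\sqrt{-1}\cdot\R$ in a set whose projection to each axis is countable. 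I expect no real difficulty beyond this; the corollary is essentially a formal concatenation of the two preceding reduction statements together with associativity of parabolic induction.
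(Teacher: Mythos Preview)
Your proposal is correct and follows exactly the approach the paper indicates: the paper's proof is simply the sentence ``Combine Lemma \ref{L:FJToBessel} and Proposition \ref{P:ReductionToCorank1}'', and you have spelled out precisely that combination (Proposition \ref{P:ReductionToCorank1} with $m=n$, then Lemma \ref{L:FJToBessel} applied on $G_{n+1}$). Your worry about the $s_1$-dependence of the exceptional set for $s_2$ is in fact harmless, since the exceptional set in Lemma \ref{L:FJToBessel} comes from Lemma \ref{L:inftesimalArguement} and depends only on the infinitesimal character of the finite-length target $\pi^\vee$, not on the inducing data $\sigma^\vee\times|\cdot|^{s_1}$.
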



\vskip 10pt

\section{Proof of the main theorem} 
\label{sec:proof_of_the_main_result_i_period_implies_relevance}

\subsection{Reduction step: a deformation argument} 
\label{sub:reduction_i_the_easy_step}


We begin the proof of Theorem \ref{T:MainNTGGPGL}(1). Let $\pi$ and $\sigma$ be irreducible unitary representations of $G_{n+1}$ and $G_n$ respectively. Suppose that 
\[
    \Hom_{G_n}\left(\pi,\sigma\right) \neq 0.
\]
We need to show that $\pi$ and $\sigma$ are relevant, and we shall do it by induction on $NT_\pi+NT_\sigma$. The basic case is of course the generic case (i.e. $NT_\pi+NT_\sigma=0$), which is already known. So now assume that at least one of $\pi$ and $\sigma$ is non-generic. The first step is to use Lemma \ref{L:vanishingLem1} repeatedly to reduce $\pi$ and $\sigma$ to ``\textit{more tempered}'' representations. We write
\[
    \Psi_\pi = \scrL_1\boxtimes S_{p_1} + \scrL_2\boxtimes S_{p_2} + \cdots + \scrL_r\boxtimes S_{p_r},
\]
and likewise
\[
    \Psi_\sigma = \scrL'_1\boxtimes S_{q_1} + \scrL'_2\boxtimes S_{q_2} + \cdots + \scrL'_s\boxtimes S_{q_s}.
\]
Firstly we can assume that $p_1$ is (one of) the biggest integers among $\left\{p_1,p_2,\cdots,p_r,q_1,q_2,\cdots,q_s\right\}$. This is because if some $q_j$ is the biggest, then by Corollary \ref{C:ExchangeVariables} we can ``exchange variables'', namely, suitably pick two unitary characters $\xi_1$ and $\xi_2$, such that
\[
    \Hom_{G_{n+1}}\left(\xi_1\times\sigma^\vee\times\xi_2, \pi^\vee\right) \simeq \Hom_{G_n}\left(\pi,\sigma\right).
\]
Since $\pi$ or $\sigma$ is non-generic, we know that $p_1\geq 2$. Let $\rho$ be either a unitary discrete series $\delta_1$ or $\delta_1|\det|^{s_1}$ of a group $G_k$, depending on the parameter $\scrL_1$ is of the form $\scrL\left(\delta_1\right)$ or $\scrL\left(\delta_1,s_1\right)$. Let $\pi_0$ be the irreducible unitary representation corresponding to the parameter
\[
    \begin{cases}
        \Psi_\pi-\scrL_1\boxtimes S_{p_1} + \scrL_1\boxtimes S_{p_1-2}, \quad & \textit{if }\scrL_1=\scrL\left(\delta_1\right);
        \vspace{0.5em}\\

        \Psi_\pi-\scrL_1\boxtimes S_{p_1}+D\left(\scrL_1\right)\boxtimes S_{p_1-1}, \quad & \textit{if }\scrL_1=\scrL\left(\delta_1,s_1\right).
    \end{cases}
\]
Then we have a surjection
\[
    \rho|\cdot|^{\frac{p_1-1}{2}}\times\pi_0\times R\left(\rho\right)|\cdot|^{-\frac{p_1-1}{2}} \lra \pi,
\]
where $R\left(\rho\right)=\delta_1$ if $\rho=\delta_1$ and $R\left(\rho\right)=\delta_1|\det|^{-s_1}$ if $\rho=\delta_1|\det|^{s_1}$. To simplify notations we temporarily set $\Pi\coloneqq \pi_0\times R\left(\rho\right)|\cdot|^{-\frac{p_1-1}{2}}$. Substituting this back we get
\[
    \Hom_{G_n}\left(\rho|\cdot|^{\frac{p_1-1}{2}}\times\Pi,\sigma\right) \neq 0.
\]
Consider the restriction of the induced representation $\rho|\cdot|^{\frac{p_1-1}{2}}\times\Pi$ to the mirabolic subgroup $M_{n+1}$ using our discussions in Section \ref{sub:orbit_analysis_on_grassmannian_rank_1_case}. The contribution of the closed orbit (and derivatives) has a filtration, with each graded piece
    \[
        \left(\Sym^j\std_{k,\C}\otimes\rho\right)|{\det}_k|^{\frac{p_1}{2}}\times\left(\Pi~\big|_{G_{n-k}}\right)
    \]
as representations of $G_n$, where $j=0,1,2,\cdots$. Note that $\Sym^j\std_{k,\C}\otimes\rho$ is still a finite length representation of $G_k$, and its Jordan--H\"older factors are of the form $\tau|\det_k|^s$, where $\tau$ is an irreducible representation of $G_k$ with unitary central character, and $s\in\R_{\geq 0}$ is a non-negative real number. Applying Lemma \ref{L:vanishingLem1} we get
\[
    \Hom_{G_n}\left(\left(\Sym^j\std_{k,\C}\otimes\rho\right)|{\det}_k|^{\frac{p_1}{2}}\times\left(\Pi~\big|_{G_{n-k}}\right), \sigma\right) = 0
\]
for all $j=0,1,2,\cdots$. Hence, the closed orbit does not contribute to the Hom space. On the other hand, the contribution of the open orbit is
\[
    \leftindex^u\ind_{S_k}^{M_{n+1}}\left(\rho|{\det}_k|^{\frac{n+p_1-k}{2}}~\big|_{M_k}\boxtimes\Pi\,|{\det}_{n+1-k}|^{-\frac{k}{2}}\right).
\]
We now discuss case by case.

\vskip 5pt

$\bullet$ \textit{Case 1}: $k=1$, so $\rho$ is a character of $G_1$. In this case $M_k=1$, so the contribution of the open orbit specializes to
\[
    \leftindex^u\ind_{G_n}^{M_{n+1}}\Pi\,|{\det}_n|^{-\frac{1}{2}}.
\]
When further restricted to $G_n$, it becomes to $\Pi\otimes\omega_n$, so we get
\[
    \Hom_{G_n}\left(\Pi\otimes\omega_n, \sigma\right) \neq 0.
\]

\vskip 5pt

$\bullet$ \textit{Case 2}: $k=2$, and $\rho = \delta_1|\det|^s$ is an essential discrete series representation of $G_2$. Here $s=0$ if $\scrL_1=\scrL\left(\delta_1\right)$, and $s=s_1$ if $\scrL_1=\scrL\left(\delta_1,s_1\right)$. In this case the contribution of the open orbit specializes to
\[
    \leftindex^u\ind_{S_2}^{M_{n+1}}\left(\delta_1|{\det}_2|^{s+\frac{n+p_1-2}{2}}~\big|_{M_2}\boxtimes\Pi\,|{\det}_{n-1}|^{-1}\right).
\]
Now it is the time to apply Lemma \ref{L:Res.d.s.To.Mirabolic}. For each graded piece $\chi\,{\rm sgn}^{j}|\cdot|^{j+\frac{m-1}{2}}$ of $\Gamma_{\delta_1}$, we get a corresponding piece of the open orbit
\[
    \leftindex^u\ind_{S_2}^{M_{n+1}}\left(\chi\,{\rm sgn}^{j}|\cdot|^{s+j+\frac{n+p_1+m-3}{2}}\boxtimes\Pi\,|{\det}_{n-1}|^{-1}\right) 
    \simeq \chi\,{\rm sgn}^{j}|\cdot|^{j+\frac{p_1+m-2}{2}}\times\left(\Pi\otimes\omega_{n-1}\right).
\]
Here we regard both sides as $G_n$-representations. Once again by Lemma \ref{L:vanishingLem1}, we have
\[
    \Hom_{G_n}\left(\chi\,{\rm sgn}^{j}|\cdot|^{s+j+\frac{p_1+m-2}{2}}\times\left(\Pi\otimes\omega_{n-1}\right),\sigma\right) = 0.
\]
Thus it remains to compute the piece of open orbit corresponding to $\Gamma_{op}$. It is not hard to see that
\[
    \leftindex^u\ind_{S_2}^{M_{n+1}}\left(\Gamma_{op}|\cdot|^{s+\frac{n+p_1-2}{2}}\boxtimes\Pi\,|{\det}_{n-1}|^{-1}\right) 
    \simeq \left(\chi|\cdot|^{s+\frac{p_1+m-1}{2}}\times\Pi \right)\otimes\omega_n.
\]
Here again we regard both sides as $G_n$-representations. From the discussions above we have
\[
    \Hom_{G_n}\left(\left(\chi|\cdot|^{s+\frac{p_1+m-1}{2}}\times\Pi \right)\otimes\omega_n,\sigma\right).
\]
By Lemma \ref{L:FJToBessel}, we can pick up a unitary character $\xi_1=|\cdot|^{s_1}$ of $G_1$, where $s_1\in\sqrt{-1}\cdot\R$, such that $\xi_1$ ``commutes'' with $\chi|\cdot|^{s+\frac{p_1+m-1}{2}}$ in the sense that
\[
    \xi_1\times \chi|\cdot|^{s+\frac{p_1+m-1}{2}} \simeq \chi|\cdot|^{s+\frac{p_1+m-1}{2}} \times \xi_1,
\]
and there is an isomorphism
\[
    \Hom_{G_n}\left(\left(\chi|\cdot|^{s+\frac{p_1+m-1}{2}}\times\Pi \right)\otimes\omega_n,\sigma\right) \simeq \Hom_{G_n}\left(\xi_1\times\left(\chi|\cdot|^{s+\frac{p_1+m-1}{2}}\times\Pi \right),\sigma\right).
\]
Then exchange the induction order and play the same game again for 
\[
    \xi_1\times\left(\chi|\cdot|^{s+\frac{p_1+m-1}{2}}\times\Pi \right) \simeq \chi|\cdot|^{s+\frac{p_1+m-1}{2}}\times\left(\xi_1\times\Pi \right),
\]
it follows that
\[
    \Hom_{G_n}\left(\left(\xi_1\times\Pi\right)\otimes\omega_n, \sigma\right) \neq 0.
\]

\vskip 5pt

To conclude, in any case of above, eventually we find a unitary character $\xi$ of $G_{k-1}$, such that
\[
    \Hom_{G_n}\left(\left(\xi\times\Pi\right)\otimes\omega_n,\sigma\right) \neq 0.
\]
Next we deal with $\Pi = \pi_0\times R\left(\rho\right)|\cdot|^{-\frac{p_1-1}{2}}$. Apply the MVW-involution
\[
    c: G_n \lra G_n, \quad g \longmapsto w_n\cdot\leftindex^{t}g^{-1}\cdot w_n^{-1}
\]
to this Hom space, we get
\[
    \Hom_{G_n}\left(\left(R\left(\rho\right)^\vee|\cdot|^{\frac{p_1-1}{2}}\times\pi_0^\vee\times \xi^{-1}\right)\otimes\omega_n ,\sigma^\vee \right) \neq 0.
\]
Here we have made use of two facts:

\begin{itemize}
    \item the MVW-involution takes any irreducible representation $\tau$ to its contragredient $\tau^\vee$, and takes parabolic induction $\tau_1\times\tau_2$ to $\tau_2^c\times\tau_1^c$;

    \vskip 5pt

    \item the Weil representation $\omega_n$ is invariant under the MVW-involution, see \cite[Lem. 4.3]{MR3930015} for more details.
\end{itemize}
\vskip 5pt
Once again, with the help of Lemma \ref{L:FJToBessel}, one can see that we are in the same situation as at the beginning of this section. Repeating what we have done, eventually we obtain 
\begin{equation}\label{E:ReductionIPeriodnonzero}
    \Hom_{G_n}\left({\rm Ps}_{2k}\times\pi_0,\sigma\right) \neq 0
\end{equation}
for a suitably chosen unitary principal series representation ${\rm Ps}_{2k}$ of $G_{2k}$. Indeed, if we write 
\[
    {\rm Ps}_{2k} = \xi_1\times\xi_2\times\cdots\times\xi_{2k},
\]
we can require that each unitary character $\xi_\alpha$ appearing in the inducing data of ${\rm Ps}_{2k}$ does not correspond to any $\scrL_i$ or $\scrL'_j$.

\vskip 5pt

\subsection{Annihilator varieties of distinguished representations} 
\label{sub:annihilator_varieties_of_distinguished_representations}

We now recall a key ingredient of the proof: the \textit{annihilator variety}, and its connection with $\SL_2$-type. Let $\pi$ be a finite length representation of $G_n$ and $\pi^{\HC}$ the Harish-Chandra module of $\pi$. Let $\gothg_n$ be the complexified Lie algebra of $G_n$, and $\gothg_n^*$ the dual Lie algebra. The annihilator variety of $\pi$ is defined to be the zero locus in $\gothg_n^*$ of 
\[
    \calV\left(\pi\right) = {\rm{Zero}}\left({\rm gr}{\rm Ann}\left(\pi^{\HC}\right)\right).
\]
It turns out that $\calV\left(\pi\right)$ always lives in the nilpotent cone $\calN_n\subset\gothg_n^*$, and when $\pi$ is irreducible there is a coadjoint orbit $\calO_\lambda$ (which corresponds to a partition $\lambda$ of $n$), such that
\[
    \calV\left(\pi\right) = \overline{\calO_\lambda}.
\]
Following \cite[Def. 1.2.2]{MR3029948}, we call $\lambda$ the \textit{associated partition} of $\pi$, and denote it by $AP\left(\pi\right)$. Recall that when $\pi$ is unitary, we have attached another partition $TP\left(\pi\right)$ to it. The following result of Gourevitch--Sahi \cite[Thm. B]{MR3029948} illustrates the relation of these two partitions. 

\begin{Thm}
Let $\pi$ be an irreducible unitary representation of $G_n$. Then
\[
    TP(\pi) = AP(\pi)^t,
\]
where the superscript ``$t$'' means taking the transpose. 
\end{Thm}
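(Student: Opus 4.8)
The plan is to reduce the identity to the building blocks of Vogan's classification and to check it there; this is the strategy of Gourevitch--Sahi \cite{MR3029948}.

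First I would prove that both partitions behave additively under (irreducible) parabolic induction. For the $\SL_2$-type this is built into the definition: $TP\left(\pi_1\times\pi_2\right)$ is obtained by merging and re-sorting the parts of $TP\left(\pi_1\right)$ and $TP\left(\pi_2\right)$, and this ``concatenation'' operation, which I write $\lambda\sqcup\mu$, satisfies $\left(\lambda\sqcup\mu\right)^t=\lambda^t+\mu^t$ (coordinate-wise sum). For the associated partition I would use, on the one hand, the standard estimate for associated varieties under parabolic induction, $\calV\left(\pi_1\times\pi_2\right)\subseteq\overline{\Ind\left(\calO_{AP\left(\pi_1\right)}\times\calO_{AP\left(\pi_2\right)}\right)}$, together with the type-$A$ fact $\Ind\left(\calO_\lambda\times\calO_\mu\right)=\calO_{\lambda+\mu}$; on the other hand, when $\pi_1\times\pi_2$ is irreducible the inclusion is an equality, since the Gelfand--Kirillov dimension of $\pi_1\times\pi_2$ is the sum of those of $\pi_1$ and $\pi_2$ and the dimension of the nilradical $\mathfrak{n}$ of the inducing parabolic, which is exactly the Gelfand--Kirillov dimension attached to a representation whose associated variety is $\overline{\Ind\left(\calO_{AP\left(\pi_1\right)}\times\calO_{AP\left(\pi_2\right)}\right)}$; being a closed subvariety of that irreducible variety of the same dimension, $\calV\left(\pi_1\times\pi_2\right)$ must coincide with it. Hence $AP\left(\pi_1\times\pi_2\right)^t=AP\left(\pi_1\right)^t\sqcup AP\left(\pi_2\right)^t$.

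Next I would invoke Vogan's classification: $\pi$ is an irreducible product of building blocks, any sub-product of which is again irreducible, and a complementary series $C\left(\delta,d,s\right)$ is itself the product $\Speh\left(\delta,d\right)|\det|^s\times\Speh\left(\delta,d\right)|\det|^{-s}$. Since twisting by $|\det|^s$ changes neither $TP$ nor $\calV$, combining the two additivity statements with an induction on the number of factors reduces the theorem to $\pi=\Speh\left(\delta,d\right)$ with $\delta$ a unitary discrete series of $G_k$, where over an Archimedean field $k\in\left\{1,2\right\}$. When $k=1$, $\Speh\left(\chi,d\right)=\chi\circ\det$ is one-dimensional, so $\calV=\left\{0\right\}=\overline{\calO_{\left(1^{d}\right)}}$, matching $TP\left(\Speh\left(\chi,d\right)\right)^t=\left(d\right)^t=\left(1^{d}\right)$. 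When $k=2$ (so $F=\R$) the point is to show $\calV\left(\Speh\left(\delta,d\right)\right)=\overline{\calO_{\left(2^{d}\right)}}$, which I would do by a double inclusion. For ``$\supseteq$'' I would exhibit a non-zero degenerate Whittaker functional of type $\left(2^{d}\right)$ on $\Speh\left(\delta,d\right)$: unfolding the surjection from the standard module $\delta|\det|^{\frac{d-1}{2}}\times\cdots\times\delta|\det|^{-\frac{d-1}{2}}$ onto $\Speh\left(\delta,d\right)$ reduces the construction to the non-zero Whittaker functional on each generic factor $\delta$, and one then checks the resulting functional does not vanish on the Langlands quotient. For ``$\subseteq$'' I would combine this lower bound with the Gelfand--Kirillov dimension computation $\dim\calV\left(\Speh\left(\delta,d\right)\right)=\dim\calO_{\left(2^{d}\right)}$ (available from the realization of $\Speh\left(\delta,d\right)$ as a cohomologically induced module, or from its $K$-type multiplicities) and the irreducibility of the associated variety of an irreducible Harish-Chandra module of a general linear group. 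Alternatively, both inclusions follow at once by identifying the primitive ideal of $\Speh\left(\delta,d\right)$ via the Kazhdan--Lusztig classification in type $A$ and applying the Robinson--Schensted description of associated varieties, or from the Archimedean analogue of the computation of associated varieties of ``ladder'' representations.

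The hard part will be this last step. The upper bounds coming from parabolic induction in the reduction are not sharp for a Langlands quotient --- for instance the standard module of $\Speh\left(\delta,d\right)$ already has associated variety the full nilpotent cone --- so pinning down $\calV\left(\Speh\left(\delta,d\right)\right)$ exactly genuinely needs the extra input above: either the construction of the degenerate Whittaker model together with the verification that it survives to the quotient, or the Kazhdan--Lusztig combinatorics. A secondary technical point relied on throughout is that associated varieties of irreducible Harish-Chandra modules of general linear groups over Archimedean fields are irreducible.
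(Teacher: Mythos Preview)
The paper does not supply its own proof of this theorem: it is quoted verbatim as \cite[Thm.~B]{MR3029948} (Gourevitch--Sahi) and used as a black box, so there is nothing in the paper to compare your argument against.

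That said, your outline is a faithful reconstruction of the Gourevitch--Sahi strategy: reduce via additivity of both invariants under irreducible parabolic induction to the building blocks of Vogan's classification, and then compute $\calV$ on Speh representations directly. Two remarks on the details. First, in the additivity step your Gelfand--Kirillov dimension argument for the \emph{equality} $\calV\left(\pi_1\times\pi_2\right)=\overline{\Ind\left(\calO_{AP(\pi_1)}\times\calO_{AP(\pi_2)}\right)}$ tacitly uses that, for an irreducible Harish-Chandra module of $\GL_n$, the annihilator variety has dimension exactly twice the GK-dimension of the module; this is true in type $A$ but is itself a nontrivial input (it comes e.g.\ from Vogan's result that $G_\C\cdot\calV(M)$ is dense in $\calV(\operatorname{Ann}M)$ together with the Lagrangian nature of $K_\C$-orbits in the nilpotent cone for $\GL_n$), so you should cite it rather than leave it implicit. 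Second, for the base case $\Speh(\delta,d)$ with $k=2$, Gourevitch--Sahi do not construct the degenerate Whittaker functional by hand and then check survival to the Langlands quotient; they instead use their notion of \emph{adduced} representations and an inductive translation argument to identify the annihilator variety. Your proposed route via degenerate Whittaker models plus a GK-dimension upper bound, or via Kazhdan--Lusztig combinatorics of primitive ideals in type $A$, would also work, but the step ``check the functional does not vanish on the Langlands quotient'' is exactly where the difficulty hides and would need a real argument, not just an assertion.
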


\vskip 5pt

We make some conventions for partitions. Given a partition $\lambda$ of $n$, we shall always write it in descending order and denote by $\lambda_i$ the $i$-th coordinate of $\lambda$. If $i$ is greater than the length of $\lambda$, i.e. $i>\lambda^t_1$, $\lambda_i$ should be understood as $0$. In \cite{MR4291954}, Gourevitch--Sayag--Karshon studied annihilator varieties associated to distinguished representations and obtained a very elegant ``micro-local'' necessary condition \cite[Thm. E]{MR4291954}.

\begin{Thm}
Let $\pi$ be an irreducible representation of $G_{n+1}$, and $\lambda=AP(\pi)$. Likewise, let $\sigma$ be an irreducible representation of $G_m$, and $\mu=AP(\sigma)$. If 
\[
    \Hom_{RS_{m,n}}\left(\pi,\sigma\boxtimes\psi_{m,n}\right) \neq 0,
\]
then we have $|\lambda_i^t-\mu_i^t| \leq 1$ for any $i\geq 1$.
\end{Thm}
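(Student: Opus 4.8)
The plan is to obtain this from \cite[Thm. E]{MR4291954}; let me recall the mechanism behind that result and indicate the small amount of bookkeeping needed to match the present formulation. The first step is to rephrase the hypothesis: a nonzero element of $\Hom_{RS_{m,n}}\left(\pi,\sigma\boxtimes\psi_{m,n}\right)$ is the same datum as a nonzero continuous functional on the (completed) tensor $\pi\otimes\sigma^\vee$, viewed as a representation of $G_{n+1}\times G_m$, which is invariant under the diagonally embedded copy $H=\{(g,\bar g):g\in RS_{m,n}\}$ of $RS_{m,n}$ (here $\bar g$ is the image of $g$ under $RS_{m,n}\twoheadrightarrow G_m$) and transforms by $\psi_{m,n}$ on the unipotent part $U_{m,n}$. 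Thus $\pi\otimes\sigma^\vee$ is $(H,\psi_{m,n})$-distinguished in the sense of \cite{MR4291954}.

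The engine is the micro-local bound on annihilator varieties of such functionals. If a finite length representation $V$ of a reductive group $\mathbf G$ carries a nonzero functional equivariant for a closed subgroup $H$ against a character with differential $\xi\in\mathfrak h^*$, then the annihilator variety $\calV(V)\subset\gothg^*$ is forced to meet, roughly, the ``$H$-admissible'' locus cut out by $H$ and $\xi$, i.e.\ the closure of $\operatorname{Ad}^*(\mathbf G)\cdot(\xi+\mathfrak h^{\perp})$ intersected with the nilpotent cone. Applying this with $\mathbf G=G_{n+1}\times G_m$, $V=\pi\otimes\sigma^\vee$ and $H$ as above, and using $\calV(\pi\otimes\sigma^\vee)=\calV(\pi)\times\calV(\sigma^\vee)$ together with $\calV(\pi)=\overline{\calO_\lambda}$, $\calV(\sigma^\vee)=\overline{\calO_\mu}$ (note $AP(\sigma^\vee)=AP(\sigma)=\mu$ for general linear groups, since the contragredient is induced by an automorphism of $G_m$), the obstruction becomes a geometric incidence relation between the nilpotent orbits $\calO_\lambda$ and $\calO_\mu$ dictated by the geometry of the $H$-action.

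The substantive part is to unwind this incidence into the inequality $|\lambda^t_i-\mu^t_i|\le 1$. Here I would decompose $RS_{m,n}=G_m\cdot U_{m,n}$ into $n+1-m$ elementary ``corank one'' layers, each layer corresponding to deleting one row and one column, i.e.\ a projection $\mathfrak{gl}_{k+1}\twoheadrightarrow\mathfrak{gl}_k$ together with one line of the Whittaker datum on $U_{m,n}$. One then tracks the effect of a single layer on the integers $\operatorname{rank}(X^j)$ — equivalently, on the partial sums $\lambda^t_1+\cdots+\lambda^t_j$ of the transpose partition — and checks that one layer can move each such partial sum by at most one; composing $n+1-m$ layers yields $|\lambda^t_i-\mu^t_i|\le 1$ for all $i\ge 1$. (Alternatively one could first invoke Proposition \ref{P:ReductionToCorank1} and Lemma \ref{L:FJToBessel} to reduce to the corank one model, but this replaces $\sigma$ by $\sigma^\vee\times I(s_1,\cdots)\otimes\omega_{n+1}$ and so would require tracking how associated varieties behave under parabolic induction and under tensoring with the Weil representation; the direct route is cleaner.) No step uses unitarity, so the statement holds for arbitrary irreducible $\pi,\sigma$; the identification $\lambda^t=TP(\pi)$, $\mu^t=TP(\sigma)$ via \cite[Thm. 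B]{MR3029948} is only needed when this theorem is later combined with the relevance condition.

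The main obstacle is precisely this last geometric/combinatorial step carried out rigorously: one must control the whole $H$-orbit closure under a non-reductive group and compute the image of the moment map on the conormal variety of the $H$-orbits on the relevant generalized flag variety. Since \cite[Thm. E]{MR4291954} does exactly this, in the write-up I would simply cite it for this step, after checking that the $\psi_{m,n}$-Rankin--Selberg model corresponds to their notion of a distinguished representation and that replacing $\sigma$ by $\sigma^\vee$ does not change the associated partition.
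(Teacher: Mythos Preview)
Your proposal is correct and matches the paper exactly: the paper does not give an independent proof of this theorem but simply quotes it as \cite[Thm.~E]{MR4291954}, which is precisely what you propose to do after verifying that the Rankin--Selberg model fits their framework and that $AP(\sigma^\vee)=AP(\sigma)$. One caution on your heuristic sketch: the ``layering'' argument as written does not give the bound, since composing $n+1-m$ steps each moving the partial sums by at most one would a priori allow a total drift of $n+1-m$, not $1$; the actual proof in \cite{MR4291954} computes the moment-map image for the full $RS_{m,n}$-model at once rather than layer by layer, so you are right to defer to the citation rather than rely on this decomposition.
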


\vskip 5pt

Combining these two theorems, one gets:

\begin{Cor}\label{C:GSKTypeInequality}
Let $\pi$ be an irreducible unitary representation of $G_{n+1}$, and $\sigma$ an irreducible unitary representation of $G_m$. If 
\[
    \Hom_{RS_{m,n}}\left(\pi,\sigma\boxtimes\psi_{m,n}\right) \neq 0,
\]
then $TP(\pi)$ and $TP(\sigma)$ are ``close'', in the sense that $|TP(\pi)_i-TP(\sigma)_i| \leq 1$ for any $i\geq 1$.
\end{Cor}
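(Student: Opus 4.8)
The plan is to obtain this as a direct consequence of the two theorems quoted immediately above, with no further input. The first step is to invoke the Gourevitch--Sahi theorem \cite[Thm. B]{MR3029948}: since $\pi$ and $\sigma$ are irreducible unitary, we have $TP(\pi) = AP(\pi)^t$ and $TP(\sigma) = AP(\sigma)^t$. Setting $\lambda = AP(\pi)$ and $\mu = AP(\sigma)$, and using the convention fixed in the paragraph before the statement that the $i$-th coordinate of a partition is $0$ once $i$ exceeds its length, this identity says exactly that $TP(\pi)_i = \lambda^t_i$ and $TP(\sigma)_i = \mu^t_i$ for every $i\geq 1$. Here $\lambda$ is a partition of $n+1$ while $\mu$ is a partition of $m$, so these may have different lengths, which is precisely why the zero-padding convention is needed to make the comparison coordinate by coordinate meaningful.

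The second step is to apply the Gourevitch--Sayag--Karshon theorem \cite[Thm. E]{MR4291954} to the pair $(\pi,\sigma)$: the hypothesis $\Hom_{RS_{m,n}}(\pi,\sigma\boxtimes\psi_{m,n})\neq 0$ gives $|\lambda^t_i - \mu^t_i|\leq 1$ for all $i\geq 1$. Combining the two identities of the first step with this inequality yields $|TP(\pi)_i - TP(\sigma)_i| = |\lambda^t_i - \mu^t_i|\leq 1$ for every $i\geq 1$, which is the assertion. I do not anticipate any genuine obstacle: the argument is purely formal once the two theorems are in hand. The only points that need care are checking that the cited theorems are applicable under the stated hypotheses — they are, since irreducibility and unitarity are part of the assumptions, and the nonvanishing of the Rankin--Selberg-type period is exactly the input of \cite[Thm. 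E]{MR4291954} — and ensuring that the partition and transpose indexing conventions are used consistently on both sides of the inequality.
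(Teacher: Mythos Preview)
Your proposal is correct and matches the paper's approach exactly: the paper simply states that the corollary follows by combining the Gourevitch--Sahi theorem with the Gourevitch--Sayag--Karshon theorem, and your write-up spells out precisely this combination.
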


\vskip 5pt

This notion of being ``close'' is already very close to the notion of ``relevance''!


\subsection{Information from annihilator varieties} 
\label{sub:information_from_annihilator_varieties}

Note that ${\rm Ps}_{2k}\times\pi_0$ is ``more tempered'' than $\pi$, in the sense that $NT_{{\rm Ps}_{2k}\times\pi_0} < NT_\pi$. So by induction hypothesis we conclude from (\ref{E:ReductionIPeriodnonzero}) that ${\rm Ps}_{2k}\times\pi_0$ and $\sigma$ are relevant. Now it is the time to use our combinatorial description of the notion of relevance. Note that
\[
    \Lambda\left(\eta,a;\pi,\sigma\right) \geq \Lambda\left(\eta,a;{\rm Ps}_{2k}\times\pi_0,\sigma\right)
\]
unless $\eta$ corresponds to one of $\left\{\xi_1,\xi_2,\cdots,\xi_{2k}\right\}$, so $\Lambda\left(\eta,a;\pi,\sigma\right) \geq 0$. On the other hand
\[
    \Lambda\left(\eta,a;\sigma,\pi\right) = \Lambda\left(\eta,a;\sigma,{\rm Ps}_{2k}\times\pi_0\right)
\]
as long as $\eta \neq \scrL_1$ or $a\neq p_1-1$. When $\eta = \scrL_1$ and $a = p_1-1$, we have
\[
    \Lambda(\scrL_1,p_1-1;\sigma,\pi) = 
    \begin{cases}
        m(\scrL_1,p_1-1;\sigma) - m(\scrL_1,p_1;\pi), & \textit{if }\scrL_1=\scrL(\delta_1);
        \vspace{0.5em}\\

        m(\scrL_1,p_1-1;\sigma) + m(D(\scrL_1),p_1;\sigma) - m(\scrL_1,p_1;\pi),  & \textit{if }\scrL_1=\scrL(\delta_1,s_1).
    \end{cases}
    .
\]
So by Lemma \ref{L:RelevantCombinatorial}, to show $\pi$ and $\sigma$ are relevant it only remains to show that 
\begin{equation*}\label{E:multi-ineq}\tag{$\heartsuit$}
    \Lambda\left(\scrL_1,p_1-1;\sigma,\pi\right) \geq 0.
\end{equation*}

\vskip 5pt

Now we make some further simplifications. 

\vskip 5pt

$\bullet$ \textit{Step 1}: Firstly, we can assume that for any $i\in\left\{2,\cdots,r\right\}$, if $p_i=p_1$ then $\scrL_i = \scrL_1$. This is because if for some $i$ we have $p_i=p_1$ but $\scrL_i \neq \scrL_1$, then we can apply the same argument as in Section \ref{sub:reduction_i_the_easy_step} to the block $\scrL_i\boxtimes S_{p_i}$. We will get another ``more tempered'' representation ${\rm Ps}'_{2k'}\times \pi'_0$, 
where $\pi'_0$ has parameter 
\[
    \begin{cases}
        \Psi_\pi-\scrL_i\boxtimes S_{p_i} + \scrL_i\boxtimes S_{p_i-2}, \quad & \textit{if }\scrL_i=\scrL\left(\delta_i\right);
        \vspace{0.5em}\\

        \Psi_\pi-\scrL_i\boxtimes S_{p_i}+D\left(\scrL_i\right)\boxtimes S_{p_i-1}, \quad & \textit{if }\scrL_i=\scrL\left(\delta_i,s_i\right),
    \end{cases}
\]
and we still have
\[
    \Hom_{G_n}\left({\rm Ps}'_{2k'}\times\pi'_0,\sigma\right) \neq 0.
\]
By the induction hypothesis, ${\rm Ps}'_{2k'}\times \pi'_0$ is relevant to $\sigma$. Then Lemma \ref{L:RelevantCombinatorial} implies that
\[
    \Lambda\left(\scrL_1,p_1-1;\sigma,{\rm Ps}'_{2k}\times \pi'_0\right) = \Lambda\left(\scrL_1,p_1-1;\sigma,\pi\right)
\]
is non-negative, i.e. the desired inequality (\ref{E:multi-ineq}) holds. 

\vskip 5pt

$\bullet$ \textit{Step 2}: Secondly, with the assumption above, we can further assume that for any $j\in\left\{1,2,\cdots,s\right\}$, we have $q_j<p_1$. The reason is the same as above. 

\vskip 5pt


The main points of making these simplifications are as follows. First of all, under these assumptions, we have
\[
    \Lambda\left(\scrL_1,p_1-1;\sigma,\pi\right) = m\left(\scrL_1,p_1-1;\sigma\right) - m\left(\scrL_1,p_1;\pi\right).
\]
Next, $p_1$ is the biggest number in the $\SL_2$-type of $\pi$, i.e. $TP\left(\pi\right)$ is of the form
\[
    \big(\underbrace{p_1,\cdots,p_1}_{m_\pi\textit{-times}},\cdots\big),
\]
and the number of occurrences of $p_1$ in $TP\left(\pi\right)$ is exactly $m_\pi=m\left(\scrL_1,p_1;\pi\right)\cdot d_1$, where $d_1 = k$ or $2k$, depending on the parameter $\scrL_1$ is of the form $\scrL\left(\delta_1\right)$ or $\scrL\left(\delta_1,s_1\right)$.
Similarly, $p_1-1$ is the biggest number in the $\SL_2$-type of $\sigma$, i.e. $TP\left(\sigma\right)$ is of the form
\[
    \big(\underbrace{p_1-1,\cdots,p_1-1}_{m_\sigma\textit{-times}},\cdots\big).
\]
We would like to express the number of occurrences $m_\sigma$ in terms of $m\left(\scrL_1,p_1-1;\sigma\right)$, as we did for $m_\pi$. There are two cases.

\vskip 5pt

$\bullet$ \textit{Case 1}: $p_1-1>1$. In this case, similar to discussions in Step 1, we can even further assume that: if $q_j=p_1-1$ then $\scrL'_j = \scrL_1$. Then the number of occurrences of $p_1-1$ in $TP\left(\sigma\right)$ is
\[
    m_\sigma=m\left(\chi_1,p_1-1;\sigma\right)\cdot d_1.
\]
By Corollary \ref{C:GSKTypeInequality}, we know that $TP\left(\pi\right)$ and $TP\left(\sigma\right)$ must be ``close'', in the sense that the differences at each coordinate should be less than or equal to 1. This implies that $m_\sigma\geq m_\pi$, hence we get
\[
    m\left(\scrL_1,p_1-1;\sigma\right) \geq m\left(\scrL_1,p_1;\pi\right)
\] 
as desired.

\vskip 5pt

$\bullet$ \textit{Case 2}: $p_1-1=1$. In this case, similar to discussions in Step 1, we can replace $\sigma$ in (\ref{E:ReductionIPeriodnonzero}) by irreducible unitary representations of the form
\[
    \sigma_0 \times I\left(s_1,s_2,\cdots,s_\ell\right),
\] 
where $\sigma_0$ is an irreducible unitary representation corresponding to the parameter
\[
    \Psi_{\sigma_0} = \underbrace{\scrL_1+\cdots+\scrL_1}_{m\left(\scrL_1,p_1-1;\sigma\right)\textit{-times}},
\]
and $s_1,s_2,\cdots,s_\ell$ can take arbitrary values in $\sqrt{-1}\cdot\R^\ell$ except for a measure-zero subset ${\rm Ex}\subset\sqrt{-1}\cdot\R^\ell$. Then, by Lemma \ref{L:FJToBessel} and Proposition \ref{P:ReductionToCorank1}, we deduce from (\ref{E:ReductionIPeriodnonzero}) that
\[
    \Hom_{RS_{m,n}}\left(\pi,\sigma_0\boxtimes\psi_{m,n}\right)\neq 0,
\]
where $m = m\left(\scrL_1,p_1-1;\sigma\right) \cdot d_1$. Note that the $\SL_2$-type $TP\left(\sigma_0\right)$ of $\sigma_0$ is simply
\[
    \big(\underbrace{p_1-1,\cdots,p_1-1}_{m\textit{-times}}\big).
\]
Again, by Corollary \ref{C:GSKTypeInequality}, we know that $TP\left(\pi\right)$ and $TP\left(\sigma_0\right)$ must be ``close,'' and it follows that 
\[
    m\left(\scrL_1,p_1-1;\sigma\right) \geq m\left(\scrL_1,p_1;\pi\right)
\] 
as desired.

\vskip 5pt

This completes the proof of Theorem \ref{T:MainNTGGPGL}(1).

\vskip 10pt






\vskip 15pt

\bibliographystyle{alpha}
\bibliography{NTGGPGLArchiRef}

\end{document}